\newtheorem{thm}{Theorem}[section] 
\newtheorem{lem}[thm]{Lemma}  
\newtheorem{cor}[thm]{Corollary} 
\newtheorem{prop}[thm]{Proposition}  
\newtheorem{hyp}{Hypothesis}
\newtheorem{defn}[thm]{Definition}
\newtheoremstyle{named}{}{}{\itshape}{}{\bfseries}{.}{.5em}{\thmnote{#3's }#1}
\theoremstyle{named}
\theoremstyle{definition}
\newtheorem{rmk}{Remark}
\newtheorem{example}{Example}
\title{Ultracontractive Properties for Directed Graph Semigroups with Applications to Coupled Oscillators}
\author{Jason J. Bramburger\\ Division of Applied Mathematics\\ Brown University \\ Providence, Rhode Island 02912\\ USA}
\date{} 
\begin{document} 

\maketitle

\abstract{It is now well known that ultracontractive properties of semigroups with infinitesimal generator given by an undirected graph Laplacian operator can be obtained through an understanding of the geometry of the underlying infinite weighted graph. The aim of this work is to extend these results to semigroups with infinitesimal generator given by a directed graph Laplacian operator, through an analogous inspection of the geometry of the underlying directed graph. In particular, we introduce appropriate nomenclature to discuss the geometry of an infinite directed graph, as well as provide sufficient conditions to extend ultracontractive properties of undirected graph Laplacians to those of the directed variety. Such directed graph Laplacians can often be observed in the study of coupled oscillators, where recent work in \cite{Bramburger} made explicit the link between synchronous patterns to systems of identically coupled oscillators and ultracontractive properties of undirected graph semigroups. Therefore, in this work we demonstrate the applicability of our results on directed graph semigroups by extending the aforementioned investigation beyond the idealized case of identically coupled oscillators.}

\section{Introduction}\label{sec:Introduction} 

The study of heat equations on graphs have long been a topic of inquiry which has successfully related the geometric properties of the underlying graph to time-dependent estimates of the behaviour of the semigroup generated by the associated graph Laplacian \cite{Delmotte,Frank,Grigoryan1,Grigoryan2}. A discrete heat equation takes the form of the linear ordinary differential equation
\begin{equation} \label{graphODE}
	\dot{x}_v(t) = \sum_{v'\in V}w(v,v')(x_{v'}(t) - x_v(t)),
\end{equation}
for each $v \in V$. Here $\dot{x}_v(t)$ denotes the derivative of $x_v(t)$ with respect to the independent variable $t$, $V$ is the countably infinite vertex set of an underlying graph, and $w(v,v')$ represents the weight of the edge from vertex $v$ to vertex $v'$.\footnote{This will be made more precise in the following section.} In the case of undirected (or symmetric) graphs, much work has been done to connect the behaviour of a random walk on the underlying graph to the long-time dynamics of solutions to the differential equation (\ref{graphODE}) associated to the graph \cite{Bramburger,Delmotte,Folz,TelcsBook}. This work has successfully introduced ultracontractive properties into the study of heat kernels on symmetric graphs, thus continuing a long investigation into decay of one-parameter semigroups which dates back at least to the seminal work of Varopoloulus \cite{Varopoulus}.

It appears that the study of discrete heat equations on graphs is greatly skewed towards undirected graphs, with few results pertaining to ultracontractive properties of system (\ref{graphODE}) associated with a directed graph. Therefore, it is the intention of this manuscript to introduce a set of sufficient conditions which allow one to obtain ultracontractive properties of the semigroup generated by the linear operator governing the right-hand side of (\ref{graphODE}), based upon the geometry of the underlying directed graph. Precisely, in this manuscript a set of sufficient conditions is provided for graphs of dimension two and up which can guarantee that the ultracontractive properties from undirected graphs can be extended to the general setting of directed graphs. These ultracontractive properties are equivalent to the uniform decay in $t$ of solutions to (\ref{graphODE}) over various Banach spaces of real sequences indexed by the vertex set $V$.      

Aside from their connection with random walks, discrete heat equations of the form (\ref{graphODE}) arise naturally in the study of coupled oscillators, where the stability of a synchronous state is often understood via the geometry of an associated graph. Although this connection has been well-studied in the finite-dimensional setting \cite{Chen,ErmentroutStability}, there still remain a number of open problems pertaining to the infinite-dimensional setting. Recent work has initiated the investigation into the connection between graph geometry and stability in infinite systems of coupled oscillators by restricting the investigation to identically coupled oscillators \cite{Bramburger}. This restriction to identically coupled oscillators lacks the generality that is already well understood in the finite-dimensional setting, and therefore in this manuscript we aim to describe how our work on system (\ref{graphODE}) can be used to extend the results of \cite{Bramburger} beyond such an idealized scenario. Therefore, our work herein leads to a more robust result detailing sufficient conditions for the stability of infinitely-many coupled oscillators. 

Systems of the form (\ref{graphODE}) have also been documented in the study of the stability of traveling wave solutions to lattice dynamical systems \cite{Hoffman}. This investigation required a tedious analysis using the Fourier transform to obtain decaying bounds on an associated Green's function, which was then used to infer linearized stability of an associated ordinary differential equation. It is therefore the intention of this work to provide a framework in which future investigations into the stability of solutions to lattice dynamical systems can readily obtain linearized stability through a careful checking of the conditions on (\ref{graphODE}) laid out in this manuscript, potentially reducing the amount of difficulty required to obtain decaying bounds on a Green's function. Hence, it has become a long-term goal to apply the results of this work to the diverse and expanding study of stability in lattice dynamical systems. 

This manuscript is organized as follows. In Section~\ref{sec:Preliminaries} we introduce the proper nomenclature, notation, and hypotheses to discuss discrete heat equations on graphs, as well as introduce some new notation to properly analyze directed graphs. Then, in Section~\ref{sec:Symmetric} we discuss some of the known results for undirected graphs, as well as provide some necessary extensions of this work which will become useful when discussing directed graphs. Our main result is Theorem~\ref{thm:AsymDecay} in Section~\ref{sec:Asymmetric} which provides a set of sufficient conditions on the geometry of a directed graph to obtain uniform decay of solutions to (\ref{graphODE}). Section~\ref{sec:Advection} is dedicated to demonstrating the importance of Hypothesis~\ref{hyp:AsymWeights}, which forms the major assumption on the geometry of the directed graphs considered in this manuscript. An example of a system of the form (\ref{graphODE}) is provided for which this assumption fails and it is shown that the decay of solutions cannot be understood via the methods outlined in this manuscript. Finally, in Section~\ref{sec:CoupledOscillators} we connect these results to the stability of coupled oscillators, resulting in Theorem~\ref{thm:PhaseStability}, which is supplemented by a brief discussion of an application of this theorem.

\section{Definitions and Hypotheses} \label{sec:Preliminaries} 

We consider a graph $G = (V,E)$ with a countably infinite collection of vertices, $V$, and a set of oriented edges between these vertices, $E$. If there exists an edge $e\in E$ originating at vertex $v$ and terminating at vertex $v'$ then we will write $v\sim v'$, but we note that since the edges are assumed to be oriented the relation $v\sim v'$ is not necessarily symmetric. Furthermore, we may equivalently consider the set of edges $E$ as a subset of the product $V \times V$ by writing $\{v,v'\} \in E$ if there exists an edge originating at vertex $v$ and terminating at vertex $v'$. A graph is called {\em strongly connected} (or simply connected in this manuscript) if for any two vertices $v,v'\in V$ there exists a finite sequence of vertices in $V$, $\{v_1,v_2, \dots , v_n\}$, such that $v\sim v_1$, $v_1\sim v_2$, $\dots$, $v_n \sim v'$. We will only consider connected graphs for the duration of this work.

We will also consider a weight function on the edges between vertices, written $w:V\times V \to \mathbb{R}$, such that for all $v,v'\in V$ we have $w(v,v') \neq 0$ if and only if $v \sim v'$. This then leads to the notion of a weighted oriented graph, written as the triple $G = (V,E,w)$. We emphasize that $w$ is not necessarily symmetric with respect to its arguments, even in the case when $v\sim v'$ and $v'\sim v$ for some $v,v'\in V$. Moreover, it should be noted that in the interest of full generality we have not assumed that the weights are nonnegative, but only that all edges must have a nonzero weight. The weight function further allows us to consider the graph Laplacian (sometimes combinatorial graph Laplacian) associated to the graph $G = (V,E,w)$ given by the linear operator, $L$, acting on the real sequences $x = \{x_v\}_{v\in V}$ by
\begin{equation}\label{graphLap}
	[Lx]_v = \sum_{v'\in V}w(v,v')(x_{v'} - x_v),
\end{equation}
so that (\ref{graphODE}) can be written abstractly as the linear ordinary differential equation $\dot{x} = Lx$, upon suppressing the dependence on the independent variable $t$ for convenience. Hence, the general solution to (\ref{graphODE}) with initial condition $x_0$ can be written $x(t) = e^{Lt}x_0$, where $e^{Lt}$ is the semigroup with infinitesimal generator $L$. It will therefore be our goal in this manuscript to obtain ultracontractive properties on the semigroup $e^{Lt}$, which are equivalent to determining uniform decay properties of the solution $x(t)$.   

Natural spatial settings for the graph Laplacian operator are the real sequence spaces 
\begin{equation} \label{ellpSpace}
	\ell^p(V) = \bigg\{x= \{x_v\}_{v\in V} |\ \sum_{v\in V} |x_v|^p < \infty\bigg\},	
\end{equation}  
for any $p\in [1,\infty)$. The vector space $\ell^p(V)$ becomes a Banach space when equipped with the norm
\begin{equation} 
	\|x\|_p := \bigg(\sum_{v\in V} |x_v|^p\bigg)^{\frac{1}{p}}.
\end{equation} 
We may also consider the Banach space $\ell^\infty(V)$, the vector space of all uniformly bounded real sequences indexed by $V$ with norm given by
\begin{equation} \label{ellInfty}
	\|x\|_\infty := \sup_{v\in V} |x_v|.
\end{equation}
It should be noted that these definitions extend to any countable index set $V$, independent of a respective graph. 

The potential asymmetry of the edges and weights on the graph $G$ make the direct application of results for graphs with undirected edges unlikely, and therefore we wish to develop a method of extending these results to the setting of (\ref{graphODE}) for a general directed graph. Let us begin by defining the function $w_\mathrm{sym}:V\times V \to \mathbb{R}$ by
\begin{equation}
	w_\mathrm{sym}(v,v') := \frac{w(v,v') + w(v',v)}{2},
\end{equation}  
so that $w_\mathrm{sym}(v,v') = w_\mathrm{sym}(v',v)$ for all $v,v'\in V$. Similarly, we will define the function $w_\mathrm{skew}:V\times V \to \mathbb{R}$ by
\begin{equation}
	w_\mathrm{skew}(v,v') := \frac{w(v,v') - w(v',v)}{2},
\end{equation}  
so that $w_\mathrm{skew}(v,v') = -w_\mathrm{skew}(v',v)$ for all $v,v'\in V$. Hence, one sees that 
\[
	w(v,v') = w_\mathrm{sym}(v,v') + w_\mathrm{skew}(v,v'),  
\]
for all $v,v'\in V$. This leads to the following definition.

\begin{defn} \label{def:Laplacians} 
The graph Laplacian (\ref{graphLap}) induces the linear operators $L_\mathrm{sym}$ and $L_\mathrm{skew}$ given by
	\begin{equation} \label{Lsym} 
		[L_\mathrm{sym}x]_v = \sum_{v'\in V}w_\mathrm{sym}(v,v')(x_{v'} - x_v),		
	\end{equation}
	and
	\begin{equation} \label{Lsym} 
		[L_\mathrm{skew}x]_v = \sum_{v'\in V}w_\mathrm{skew}(v,v')(x_{v'} - x_v).		
	\end{equation}
	We refer to $L_\mathrm{sym}$ as the {\bf symmetric graph Laplacian} induced by $L$, and $L_\mathrm{skew}$ as the {\bf skew-symmetric graph Laplacian} induced by $L$.
\end{defn}

It should immediately be noted that $L = L_\mathrm{sym} + L_\mathrm{skew}$. Moreover, the function $w_\mathrm{sym}$ and the linear operator $L_\mathrm{sym}$ also leads to the definition of an underlying undirected weighted graph.

\begin{defn} \label{def:SymmetricGraph} 
	The {\bf symmetric graph} induced by $G$, denoted $G_\mathrm{sym}$, is the graph with vertex set $V$ and edge set, $E_\mathrm{sym}$, defined by assigning an undirected edge connecting $v,v' \in V$ if and only if $w_\mathrm{sym}(v,v') \neq 0$. 
\end{defn} 
    
The graph $G_\mathrm{sym}$ becomes a weighted graph when considered with the symmetric weight function $w_\mathrm{sym}$. Therefore, it will be through the graph $G_\mathrm{sym} = (V,E_\mathrm{sym},w_\mathrm{sym})$ and the associated symmetric graph Laplacian $L_\mathrm{sym}$ that we will work to understand decay properties of the linear equation (\ref{graphODE}). We present the following hypothesis which is fundamental to our interpretation of $G_\mathrm{sym}$, and in turn $G$.

\begin{hyp}\label{hyp:SymmetricWeights} 
	The graph $G_\mathrm{sym} = (V,E_\mathrm{sym},w_\mathrm{sym})$ satisfies the following:
	\begin{enumerate}
		\item The function $w_\mathrm{sym}:V\times V \to \mathbb{R}$ is nonnegative, and furthermore if $w(v,v')\cdot w(v',v)\neq 0$, then $w_\mathrm{sym}(v,v') > 0$.
		\item There exists a constant $M > 0$ such that $w_\mathrm{sym}(v,v') \leq M$ for all $v,v' \in V$.
		\item The set $N(v) := \{v'\in V:\ w_\mathrm{sym}(v,v') > 0\}$ is such that there exists a constant $D \geq 1$ such that $1 \leq |N(v)| \leq D$ for all $v \in V$.
	\end{enumerate}	 
\end{hyp}

Hypothesis~\ref{hyp:SymmetricWeights} first details that all edge weights of $G_\mathrm{sym}$ are strictly positive. We should note that this does not contradict our assumption that the weight function $w$ associated with the original directed graph $G$ can assume negative values. Indeed, we simply have imposed that if $w(v,v') < 0$, then we necessarily have $w(v',v) > 0$ and $w(v,v') + w(v',v) > 0$. This also leads to our second assumption in Hypothesis~\ref{hyp:SymmetricWeights} which details that the graph $G_\mathrm{sym}$ essentially takes all edges in $G$, makes them unoriented, and assigns a weight which is the average of the directed edge weights between each pair of vertices. Hence, we have assumed that the creation of $G_\mathrm{sym}$ does not disconnect two vertices, and since $G$ was assumed to be connected, we therefore have that $G_\mathrm{sym}$ is also connected. 

Hypothesis~\ref{hyp:SymmetricWeights} further details that we assume the edge weights to be uniformly bounded above, and that each vertex in $G_\mathrm{sym}$ is connected to a finite number of vertices. When a graph exhibits this latter property, it is often said to be {\em locally finite}. It should be noted that our assumption is slightly more restrictive than just being locally finite though, as we have assumed that the number of vertices each vertex is connected to is uniformly bounded from above. The set $N(v)$ represents the neighbourhood of $v \in V$ in the graph $G_\mathrm{sym}$. In the context of the directed graph $G$, under Hypothesis~\ref{hyp:SymmetricWeights} we have that $N(v)$ represents the set of all vertices $v' \in V$ for which $v \sim v'$ or $v' \sim v$. It will be through Hypothesis~\ref{hyp:SymmetricWeights} that we will work to understand the decay of solutions to the linear equation (\ref{graphODE}).

We now turn to the skew-symmetric graph Laplacian $L_\mathrm{skew}$ and the associated weight function $w_\mathrm{skew}$. Let us define the quantity
\begin{equation} \label{WValue}
	W := \sum_{v\in V}\sum_{v' \in V} |w_\mathrm{skew}(v,v')| \in [0,\infty].
\end{equation}
This leads to the following hypothesis.

\begin{hyp} \label{hyp:AsymWeights}
	The quantity $W$ defined in (\ref{WValue}) is finite.	
\end{hyp}

We remark that we make no assumption on the exact magnitude of $W$, we simply assume that it is finite. This implies that for any $\varepsilon > 0$, the weights $w(v,v')$ and $w(v',v)$ will be $\varepsilon$-close for infinitely many $v,v' \in V$. In Section~\ref{sec:Advection} we demonstrate that in the absence of Hypothesis~\ref{hyp:AsymWeights} solutions to (\ref{graphODE}) cannot necessarily be understood through the geometry of the associated symmetric graph. We conclude this section with the following simple example which illustrates all of the definitions and hypotheses put forth in this section.

\begin{example}\label{Ex:Graph}
	Consider a directed graph with index set $V = \mathbb{Z}$ with edges from vertices $n$ to $n+1$ and vice-versa, except that there is no edge from vertices indexed by $0$ to $1$. The edge weights are given by
	\[
		\begin{split}
			w(n,n+1) &= 1 - \frac{1}{1 + n^2}, \\
			w(n+1,n) &= 1 + \frac{1}{1 + n^2},
		\end{split}
	\]
	for all $n \in \mathbb{Z}$, where we note that $w(0,1) = 0$ meaning that there is no edge from $0$ to $1$. Figure~\ref{fig:ExampleGraph} provides a visual representation of this graph. The important point here is that for large $|n|$ we have that the weights $w(n,n+1)$ and $w(n+1,n)$ become uniformly close together at a rate of $\mathcal{O}(n^{-2})$, which will guarantee that Hypothesis~\ref{hyp:AsymWeights} is indeed satisfied. 
	
	\begin{figure} 
	\centering
		\includegraphics[width = 0.6\textwidth]{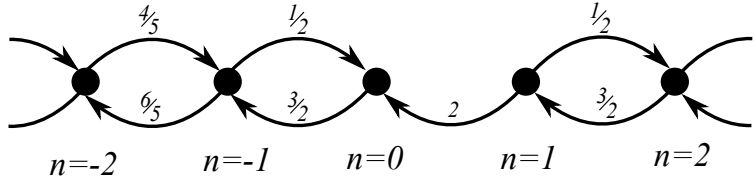}
		\caption{The graph discussed in Example~\ref{Ex:Graph}. The vertex set is given by the set of integers with edges from vertices $n$ to $n+1$ and vice-versa, with the exception of an edge from $0$ to $1$. Edge weights are given above each edge.}
		\label{fig:ExampleGraph}	
\end{figure} 
	
	Then, using the definition of $w_\mathrm{sym}$ and $w_\mathrm{skew}$ above we get that 
	\[
		\begin{split}
			w_\mathrm{sym}(n,n+1) &= 1, \\
			w_\mathrm{skew}(n,n+1) &= \frac{-1}{1+n^2},
		\end{split}
	\]  
	for all $n \in \mathbb{Z}$, along with the symmetry conditions $w_\mathrm{sym}(n+1,n) = w_\mathrm{sym}(n,n+1)$ and $w_\mathrm{skew}(n+1,n) = -w_\mathrm{skew}(n,n+1)$.  The graph $G_\mathrm{sym}(\mathbb{Z},E_\mathrm{sym},w_\mathrm{sym})$ is simply the standard one-dimensional integer lattice, where successive integers are connected by an undirected edge of weight $1$. Hence, it is very easy to check that Hypothesis~\ref{hyp:SymmetricWeights} does indeed hold for $G_\mathrm{sym}$. Moreover, the quantity $W$ in this case is given by
	\[
		W = \sum_{n = -\infty}^\infty \frac{2}{1 + n^2} = 2\pi\coth(\pi),
	\]
	where $\coth$ is the hyperbolic cotangent function. Hence, $W < \infty$ in this case, and hence Hypothesis~\ref{hyp:AsymWeights} holds for this graph as well.  
\end{example} 

\section{Ultracontractive Properties for Undirected Graphs} \label{sec:Symmetric} 

In this section we provide a review of the relevant results for undirected graphs. We will see that an understanding of the geometry of the graph $G_\mathrm{sym} = (V,E_\mathrm{sym},w_\mathrm{sym})$ can be used to obtain uniform decay of solutions to the linear ordinary differential equation 
\begin{equation} \label{symmetricODE}
	\dot{x} = L_\mathrm{sym}x.
\end{equation}
It should be noted that much of the information in this section comes as a review of the work in \cite{Bramburger}, and many of the graph theoretic facts and definitions can be found in, for example, $\cite{Delmotte,TelcsBook}$. Hence, in this section we provide assumptions that lead to the algebraic decay of solutions to (\ref{symmetricODE}), which will be utilized in the following section to obtain algebraic decay of solutions to (\ref{graphODE}).   

We begin by defining the {\em measure} of a vertex of the graph $G_\mathrm{sym}$, written $m:V \to [0,\infty]$, and defined by
\begin{equation} \label{VertexMeasure}
	m(v) := \sum_{v'\in V} w_\mathrm{sym}(v,v') = \sum_{v \in N(v)} w_\mathrm{sym}(v,v').
\end{equation} 
We note that Hypothesis~\ref{hyp:SymmetricWeights} dictates that $0 < m(v) \leq MD$, for all $v \in V$, and hence the measure $m$ is well-defined. This notion extends to the volume of a subset, $V_0 \subset V$, by defining 
\begin{equation}
	{\rm Vol}(V_0) := \sum_{v \in V_0} m(v).
\end{equation} 
Hence, we see that $G_\mathrm{sym}$ can be interpreted as a measure space with $\sigma$-algebra given by the power set of $V$.

Connected undirected graphs also have a natural metric associated to them, here denoted $\rho$, which returns the smallest number of edges needed to traverse from one vertex to another. This metric allows for the consideration of a ball of radius $r \geq 0$ centred at the vertex $v \in V$, denoted by
\begin{equation}
	B(v,r) := \{v'\ |\ \rho(v,v') \leq r\}.
\end{equation} 
For simplicity we will simply write ${\rm Vol}(v,r)$ to denote ${\rm Vol}(B(v,r))$. The combination of the graph metric and the vertex measure allows one to interpret a weighted graph as a {\em metric-measure space}.

We now provide a series of definitions to describe the geometry of $G_\mathrm{sym}$.

\begin{defn} \label{def:VolumeGrowth} 
	The weighted graph $G_\mathrm{sym} = (V,E_\mathrm{sym},w_\mathrm{sym})$ satisfies a {\bf uniform polynomial volume growth} condition of order $d$, abbreviated VG(d), if there exists $d > 0$ and $c_{vol,1},c_{vol,2} > 0$ such that
	\begin{equation}
		c_{vol,1}r^d \leq {\rm Vol}(v,r) \leq c_{vol,2}r^d,
	\end{equation}
	for all $v \in V$ and $r \geq 0$.
\end{defn}

The value $d$ in Definition~\ref{def:VolumeGrowth} is often referred to as the {\em dimension} of the graph $G_\mathrm{sym}$. A potential reason for this is that the characteristic examples of graphs satisfying VG(d) are the integer lattices $\mathbb{Z}^d$ with an edge between two vertices $n,n' \in \mathbb{Z}^d$ if and only if $\|n - n'\|_1 = 1$, and all edge weights taken to be identically $1$ \cite{BarlowCoulhonGrigoryan}. In Example~\ref{Ex:Graph} we saw that the resulting symmetric graph is exactly of this type, and therefore it satisfies VG(1). It should be noted that $d$ need not be an integer, as one may construct fractal graphs which satisfy VG(d) for non-integer valued $d > 0$. For the duration of this work we will restrict our attention to $d \geq 2$, since the methods of Section~\ref{sec:Asymmetric} fail when $d < 2$. 

\begin{defn} \label{def:Delta} 
	We say $G_\mathrm{sym} = (V,E_\mathrm{sym},w_\mathrm{sym})$ satisfies the {\bf local elliptic property}, denoted $\Delta$, if there exists an $\alpha > 0$ such that
	\begin{equation}
		w_\mathrm{sym}(v,v') \geq \alpha m(v)
	\end{equation}
	for all $v \in V$ and $v' \in N(v)$.
\end{defn}

It was pointed out in \cite[Lemma~$3.5$]{Bramburger} that a sufficient condition to satisfy this local elliptic property, $\Delta$, is to have the edge weights, $w_\mathrm{sym}(v,v')$, bounded above and below by positive constants for all $v \in V$, $v' \in N(v)$ and to have a uniform upper bound on the number of elements in $N(v)$ over $v \in V$. Of course, Hypothesis~\ref{hyp:SymmetricWeights} takes care of two thirds of these sufficient conditions, but in the interest of generality we refrain from assuming the third and final condition, as these conditions were not found to be necessary. 

\begin{defn} 
	The weighted graph $G_\mathrm{sym} = (V,E_\mathrm{sym},w_\mathrm{sym})$ satisfies the {\bf Poincar\'e inequality}, abbreviated PI, if there exists a constant $C_{PI} > 0$ such that
	\begin{equation}
		\sum_{v \in B(v_0,r)} m(v)|x_v - x_B(v_0)|^2 \leq C_{PI} r^2\bigg( \sum_{v,v' \in B(v_0,2r)} w_\mathrm{sym}(v,v')(x_v - x_{v'})^2\bigg),
	\end{equation}	
	for all real sequences $\{x_v\}_{v\in V}$, all $v_0 \in V$, and all $r > 0$, where
	\begin{equation}
		x_B(v_0) = \frac{1}{{\rm Vol}(v_0,r)} \sum_{v\in B(v_0,r)} m(v)x_v.
	\end{equation}
\end{defn}

It is immediately apparent that proving an undirected graph satisfies the Poincar\'e inequality is a significant analytical undertaking. Some methods were outlined in \cite{PoincareInequalities}, and in \cite{Bramburger} the notion of a rough isometry was introduced to demonstrate that a graph satisfies PI. We refrain from going into further detail here, but direct the reader to those sources for a full analytical treatment of the Poincar\'e inequality with regards to undirected graphs. 

It is well-known \cite{Horn,KellerLenz,Weber} that $L_\mathrm{sym}$ is the infinitesimal generator of the semigroup $P_t = e^{L_\mathrm{sym}t}$. Moreover, the semigroup $P_t$ acts on the real sequences $x = \{x_v\}_{v\in V}$ by 
\begin{equation}\label{KernelExpansion}
	[P_tx]_v = \sum_{v'\in V} p_t(v,v')x_{v'},
\end{equation}
where $p_t(v,v')$ are transition probabilities generated by a random walk on the weighted graph $G_\mathrm{sym}$ \cite{Delmotte,Horn,Weber} (see \cite{Bramburger} for complete details). This leads to the following proposition, which summarizes the work of \cite[Section~$3.3$]{Bramburger}. 

\begin{prop}[\cite{Bramburger}, \S3.3]\label{prop:SymDecay} 
	Assume that $G_\mathrm{sym} = (V,E_\mathrm{sym},w_\mathrm{sym})$ satisfies Hypothesis~\ref{hyp:SymmetricWeights}, $\Delta$, PI, and VG(d) for some $d > 0$. Then, there exists a constant $C_\mathrm{sym} > 0$ such that
	\begin{equation}
		\begin{split}
			\|P_tx\|_p &\leq C_\mathrm{sym}\|x\|_p, \\
			\|P_tx\|_p &\leq C_\mathrm{sym}(1 + t)^{-\frac{d}{2}(1 - \frac{1}{p})}\|x\|_1, 
		\end{split}
	\end{equation}	
	for all $p \in [1,\infty]$ and real sequences $x = \{x_v\}_{v\in V} \in \ell^1(V)$.
\end{prop}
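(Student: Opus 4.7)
The plan is to follow the classical Varopoulos--Nash strategy adapted to weighted graphs: first establish uniform $\ell^p$ contractivity, then prove the on-diagonal heat kernel bound (the key analytic input), and finally interpolate to extract the claimed $\ell^1 \to \ell^p$ decay. Since the proposition is explicitly a summary of \S3.3 of \cite{Bramburger}, the approach is to reduce each bound to the standard machinery for reversible random walks on locally finite weighted graphs developed there.

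For the contractivity estimate I would use the kernel expansion \eqref{KernelExpansion} and reduce matters to the two Schur-type inequalities $\sup_v \sum_{v'} p_t(v,v') \leq 1$ and $\sup_{v'} \sum_v p_t(v,v') \leq C$. The first holds because $p_t(v,\cdot)$ is a probability measure for the random walk construction of \cite{Delmotte,Horn,Weber}, which immediately gives $\|P_t\|_{\ell^\infty \to \ell^\infty} \leq 1$. The dual bound follows from the reversibility identity $m(v)p_t(v,v') = m(v')p_t(v',v)$ combined with the uniform bounds on $m$ forced by Hypothesis~\ref{hyp:SymmetricWeights} and $\Delta$, yielding $\|P_t\|_{\ell^1 \to \ell^1} \leq C$. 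Riesz--Thorin interpolation then produces the contractive bound on every $\ell^p(V)$ for $p \in [1,\infty]$.

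For the ultracontractive estimate it suffices to establish the endpoint $\ell^1 \to \ell^\infty$ bound, i.e. $\sup_{v,v'} p_t(v,v') \leq C(1+t)^{-d/2}$. The semigroup identity $P_t = P_{t/2}P_{t/2}$ together with reversibility yields the standard Cauchy--Schwarz inequality $p_t(v,v') \leq \sqrt{p_t(v,v)\, p_t(v',v')}$, so the problem reduces to the on-diagonal estimate $p_t(v,v) \leq C(1+t)^{-d/2}$. This on-diagonal bound is the technical heart: it is obtained from a Nash-type functional inequality on $G_\mathrm{sym}$, which in the weighted-graph setting is extracted from the Poincar\'e inequality PI together with the volume growth VG($d$), with $\Delta$ supplying comparability of $m$ across neighbouring vertices. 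Once the on-diagonal bound is in hand, a second Riesz--Thorin interpolation between $\ell^1 \to \ell^\infty$ decay and $\ell^1 \to \ell^1$ contractivity delivers $\|P_tx\|_p \leq C(1+t)^{-\frac{d}{2}(1-\frac{1}{p})}\|x\|_1$ for every $p \in [1,\infty]$.

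The main obstacle is the passage from VG($d$) + PI + $\Delta$ to the Nash inequality, equivalently to the on-diagonal heat kernel bound; everything else in the argument is Schur-test bookkeeping and interpolation. This is where the geometric content of $G_\mathrm{sym}$ genuinely enters, and it is also where the standing restriction $d \geq 2$ becomes important, since the time-integration step that converts a Nash inequality into a power-law kernel bound requires the exponent $d/2$ to be large enough for the resulting integral to give the advertised rate. Because the statement is explicitly attributed to \cite{Bramburger}, in a complete write-up I would cite the corresponding result there rather than reproduce the derivation.
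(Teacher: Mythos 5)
Your outline is consistent with what the paper actually does for this statement, which is: nothing --- Proposition~\ref{prop:SymDecay} is imported wholesale from \cite{Bramburger}, \S3.3, and your closing sentence (cite rather than rederive) is exactly the paper's stance. Where your sketch differs from the source it summarizes is the route to the key analytic input: in \cite{Bramburger} the estimates come from Delmotte's theorem that $\Delta$ + PI + VG(d) imply a parabolic Harnack inequality and hence two-sided Gaussian bounds on the kernel $p_t(v,v')$ in (\ref{KernelExpansion}), after which the $\ell^p$ bounds follow by summing the kernel estimates against the volume growth; this is the same machinery the present paper re-uses in the proof of Lemma~\ref{lem:GradDecay} (Delmotte \cite{Delmotte} plus \cite{GyryaSaloffCoste}). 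Your plan --- Schur test for uniform $\ell^p$ boundedness, Cauchy--Schwarz reduction to the on-diagonal bound, a Nash-type inequality extracted from PI + VG(d), then interpolation --- is a legitimate alternative that funnels into the same deep input (the on-diagonal bound $\lesssim (1+t)^{-d/2}$), so the two approaches are of comparable weight; yours makes the interpolation bookkeeping explicit, the paper's buys the kernel bounds off the shelf.

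Two points in your write-up need repair. First, the dual Schur bound is justified by ``uniform bounds on $m$ forced by Hypothesis~\ref{hyp:SymmetricWeights} and $\Delta$,'' but those two assumptions give only $0 < m(v) \leq MD$; no uniform positive lower bound on $m$ follows from them alone, since $\Delta$ reads $w_\mathrm{sym}(v,v') \geq \alpha m(v)$ and is compatible with $m$ decaying to zero along the graph. The lower bound is recoverable under the proposition's full hypotheses, but you must invoke VG(d) at unit scale: $\Delta$ gives $\alpha m(v') \leq w_\mathrm{sym}(v,v') \leq m(v)$ for neighbouring vertices, hence $\mathrm{Vol}(v,1) \leq (1 + D/\alpha)\,m(v)$, and the VG(d) lower bound at $r = 1$ then yields $m(v) \geq c_{vol,1}(1 + D/\alpha)^{-1}$. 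Without this (or without routing the $\ell^1\to\ell^1$ bound through the Gaussian estimates) the column-sum step is unjustified as written. Second, your remark locating the $d \geq 2$ restriction in the Nash-to-on-diagonal step is mistaken: the Nash iteration produces the $(1+t)^{-d/2}$ on-diagonal bound for every $d > 0$, and indeed the proposition is stated, and needed, for all $d > 0$. The restriction $d \geq 2$ enters only later in the paper, in Lemma~\ref{lem:GradDecay2} and Corollary~\ref{cor:1InfDecay}, where $\frac{d}{2} + \eta > 1$ is required for the Duhamel integrals controlling the skew-symmetric perturbation to converge.
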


We now provide an extension of Proposition~\ref{prop:SymDecay}, which will be integral to our work here. Let us begin by defining the functions
\[
	Q_p(x) = \bigg(\sum_{v\in V} \sum_{v'\in N(v)} |x_{v'} - x_v|^p\bigg)^\frac{1}{p},
\]
for all $p \geq 1$ and
\[
	Q_\infty(x) = \sup_{v\in V, v' \in N(v)} |x_{v'} - x_v|.
\]
It is easy to see that for each $p \in [1,\infty]$ the functions $Q_p$ satisfy $Q_p(x) \leq 2D\|x\|_p$ for all $x \in \ell^p$, and are semi-norms on $\ell^p(V)$ for each $p \geq 1$. Furthermore, since $G_\mathrm{sym}$ is assumed to be connected, it follows that the $Q_p$ vanish if and only if $x$ is a constant sequence. The components $|x_{v'} - x_v|$ are typically interpreted as the discrete analogue of a directional derivative of the sequence $x$ in the direction of the edge $\{v,v'\}$. Hence, $Q_p$ can be thought to be the $p$-norm of the (discrete) gradient of the sequences in $\ell^p(V)$. We now provide the following lemma which extends the bounds of Proposition~\ref{prop:SymDecay}.

\begin{lem}\label{lem:GradDecay} 
	Assume that $G_\mathrm{sym} = (V,E_\mathrm{sym},w_\mathrm{sym})$ satisfies Hypothesis~\ref{hyp:SymmetricWeights}, $\Delta$, PI, and VG(d) for some $d > 0$. There exists constants $C_Q,\eta > 0$ such that for all $x = \{x_v\}_{v \in V} \in \ell^1(V)$ we have
	\[
		Q_p(P_tx) \leq C_Q(1 + t)^{-\frac{d}{2}(1-\frac{1}{p})-\eta}\|x\|_1, 
	\]
	for all $t \geq 0$ and $p \in [1,\infty]$. 
\end{lem}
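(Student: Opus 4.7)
My plan is to extract the extra $(1+t)^{-\eta}$ factor beyond Proposition~\ref{prop:SymDecay} by handling the endpoint cases $p = 2$, $p = \infty$, and $p = 1$ by separate techniques, then interpolating to cover all $p \in [1,\infty]$. For $p = 2$ I would run an energy estimate exploiting self-adjointness of $L_\mathrm{sym}$ on $\ell^2(V)$, and for the endpoints $p = \infty$ and $p = 1$ I would invoke H\"older regularity of the underlying heat kernel.

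For the $p = 2$ bound, let $\mathcal{E}(y) := \sum_v \sum_{v' \in N(v)} w_\mathrm{sym}(v,v')(y_{v'} - y_v)^2$. The symmetry of $w_\mathrm{sym}$ makes $L_\mathrm{sym}$ self-adjoint on $\ell^2(V)$, yielding $\frac{d}{ds}\|P_s y\|_2^2 = -\mathcal{E}(P_s y)$ and $\frac{d}{ds}\mathcal{E}(P_s y) = -4\|L_\mathrm{sym} P_s y\|_2^2 \leq 0$, so $s \mapsto \mathcal{E}(P_s y)$ is nonincreasing. Integrating over $[0, t]$ and using monotonicity gives $\mathcal{E}(P_t y) \leq \|y\|_2^2 / t$; setting $y = P_{t/2} x$ and invoking Proposition~\ref{prop:SymDecay} at $p = 2$ produces
\[
\mathcal{E}(P_t x) \leq \frac{C}{t}(1+t)^{-d/2}\|x\|_1^2.
\]
Using the bounds on $w_\mathrm{sym}$ from Hypothesis~\ref{hyp:SymmetricWeights} together with $\Delta$, this converts (for $t \geq 1$) into $Q_2(P_t x) \leq C(1+t)^{-d/4 - 1/2}\|x\|_1$; the regime $t \in [0,1]$ is absorbed into the constant by the trivial bound $Q_2(P_t x) \leq C\|P_t x\|_2 \leq C\|x\|_1$.

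For $p = \infty$ and $p = 1$, I would exploit the representation $(P_t x)_{v'} - (P_t x)_v = \sum_u [p_t(v', u) - p_t(v, u)] x_u$, which reduces both endpoints to pointwise (respectively summed) control of kernel differences. Under the hypotheses at hand, the parabolic Harnack inequality of Delmotte furnishes constants $\alpha, C > 0$ such that $|p_t(v, u) - p_t(v', u)| \leq C(1+t)^{-d/2 - \alpha/2}$ whenever $v' \in N(v)$, uniformly in $u \in V$. This directly yields $Q_\infty(P_t x) \leq C(1+t)^{-d/2 - \alpha/2}\|x\|_1$, and combined with the off-diagonal Gaussian estimate for $p_t$ (which confines the effective range of summation in $v$ to a ball of volume $\sim t^{d/2}$), it also produces $Q_1(P_t x) \leq C(1+t)^{-\alpha/2}\|x\|_1$.

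Finally, for $p \in [2,\infty]$ the elementary inequality $Q_p(y)^p \leq Q_\infty(y)^{p-2} Q_2(y)^2$ combines the $p = 2$ and $p = \infty$ estimates into $Q_p(P_t x) \leq C(1+t)^{-\frac{d}{2}(1 - 1/p) - \eta}\|x\|_1$ with $\eta := \min(1/2, \alpha/2) > 0$ uniform in $p$; the range $p \in [1,2]$ is handled identically via the log-convexity bound $Q_p(y) \leq Q_1(y)^{2/p - 1} Q_2(y)^{2 - 2/p}$ paired with the $p = 1$ estimate. The principal obstacle is the H\"older regularity of $p_t$ used in the preceding step: it is classical under Hypothesis~\ref{hyp:SymmetricWeights}, $\Delta$, PI, and VG(d), but is not proved in the paper's earlier sections, so the main technical effort is to verify that the standard Delmotte--Grigoryan machinery applies under the precise form of $\Delta$ adopted here.
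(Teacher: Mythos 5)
Your proposal reaches the stated estimate but by a genuinely different route from the paper. The paper uses a single input: since $\Delta$, PI and VG(d) imply Delmotte's parabolic Harnack inequality \cite{Delmotte}, the H\"older-regularity estimate of \cite[Theorem~2.32]{GyryaSaloffCoste} holds in the \emph{multiplicative} form
\[
	|p_t(v,v'') - p_t(v',v'')| \leq C_0\, m(v'')\,(1+t)^{-\beta/2}\, p_{2t}(v,v''), \qquad v' \in N(v),
\]
which converts $Q_p(P_tx)$ directly into $C(1+t)^{-\beta/2}\|P_{2t}|x|\|_p$; Proposition~\ref{prop:SymDecay} then finishes all $p\in[1,\infty]$ simultaneously with $\eta=\beta/2$ and no interpolation. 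You instead treat the endpoints separately ($p=2$ by the energy identity and monotonicity of $\mathcal{E}$ along the semigroup, $p=\infty$ and $p=1$ by kernel regularity in additive sup-norm form plus on-diagonal/Gaussian bounds) and then interpolate to get $\eta=\min\{\alpha/2,1/2\}$. What your route buys is a self-contained, elementary $p=2$ case that needs no heat-kernel regularity at all; what it costs is extra machinery at the endpoints and an interpolation step that the paper's weighted estimate renders unnecessary. Both arguments ultimately rest on the same deep fact (PHI $\Rightarrow$ H\"older continuity of $p_t$), which, as you note, the paper also imports rather than proves.

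Two places in your write-up need repair. First, passing from $\mathcal{E}(P_tx)$ to $Q_2(P_tx)$ requires a uniform positive lower bound on $w_\mathrm{sym}(v,v')$ over edges; Hypothesis~\ref{hyp:SymmetricWeights} together with $\Delta$ does not give this (weights are only bounded above, and $\Delta$ gives $w_\mathrm{sym}(v,v')\geq\alpha m(v)$ with no a priori lower bound on $m(v)$). It is rescued by VG(d): for $0\leq r<1$ one has $B(v,r)=\{v\}$, so $m(v)\geq c_{vol,1}r^d$ for all $r<1$, hence $m(v)\geq c_{vol,1}$ and $w_\mathrm{sym}(v,v')\geq\alpha c_{vol,1}>0$; make that dependence explicit. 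Second, your $Q_1$ bound does not follow from the uniform difference estimate $|p_t(v,u)-p_t(v',u)|\leq C(1+t)^{-d/2-\alpha/2}$ alone, since summing it over all $v$ diverges; the Gaussian localization you invoke must be built into the difference estimate itself (either via a ball-plus-tail decomposition, which costs a logarithm and a slightly smaller $\eta$, or via the $p_{2t}$-weighted form above, for which $\sum_v m(u)p_{2t}(v,u)\leq MD$ by reversibility and stochasticity). The latter is exactly the form the paper quotes, and adopting it would also let you dispense with the endpoint-plus-interpolation structure entirely.
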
    

\begin{proof}
	It was shown by Delmotte that any graph satisfying $\Delta$, PI, and VG(d) for some $d > 0$, must also satisfy a Parabolic Harnack Inequality \cite{Delmotte}, which we do not explicitly state here because it will not be necessary to our result. But, the work of \cite[Theorem~$2.32$]{GyryaSaloffCoste} dictates that any graph (or more generally metric space) satisfying the Parabolic Harnack Inequality further satisfies the estimate
\[
	|p_t(v_1,v_3) - p_t(v_2,v_3)| \leq C_0m(v_3) \bigg(\frac{\rho(v_1,v_2)}{\sqrt{1 + t}}\bigg)^\beta p_{2t}(v_1,v_3) 
\]
for all $v_1,v_2,v_3 \in V$ and some independent constants $C_0, \beta > 0$. Hence, assuming Hypothesis~\ref{hyp:SymmetricWeights}, for all $v, v'' \in V$ and $v' \in N(v)$ we have
\[
	|p_t(v,v'') - p_t(v',v'')| \leq C_0M(1 + t)^{-\frac{\beta}{2}}p_{2t}(v,v''),	
\]	
since $\rho(v,v') = 1$ because $v' \in N(v)$.

Then, using the form for $P_t$ given in (\ref{KernelExpansion}), for all $x \in \ell^1(V)$, $v\in V$, and $v'\in N(v)$ we have 
\[
	\begin{split}
		|[P_tx]_{v'} - [P_tx]_v| &\leq \sum_{v'' \in V} |p_t(v,v'') - p_t(v',v'')| |x_{v''}| \\
			&\leq C_0M(1 + t)^{-\frac{\beta}{2}} \sum_{v'' \in V} p_{2t}(v,v'') |x_{v''}| \\
			&= C_0M(1 + t)^{-\frac{\beta}{2}}[P_{2t}|x|]_v. 
	\end{split}
\]
This therefore implies that
\[
	\begin{split}
		Q_p(P_tx) &= \bigg(\sum_{v \in V}\sum_{v' \in N(v)} |[P_tx]_{v'} - [P_tx]_v|\bigg)^\frac{1}{p} \\
		&\leq C_0M(1 + t)^{-\frac{\beta}{2}}\bigg(\sum_{v \in V}\sum_{v' \in N(v)} [P_{2t}|x|]_v\bigg)^\frac{1}{p} \\
		&\leq C_0D^\frac{1}{p}M(1 + t)^{-\frac{\beta}{2}}\bigg(\sum_{v \in V}[P_{2t}|x|]_v\bigg)^\frac{1}{p} \\
		&\leq C_0D^\frac{1}{p}M(1 + t)^{-\frac{\beta}{2}}\|P_{2t}|x|\|_p,
	\end{split}
\]
for all $p \in [1,\infty)$ since $|N(v)| \leq D$ for all $v \in V$. Then, from Proposition~\ref{prop:SymDecay} we have that 
	\[
		\|P_{2t}|x|\|_p \leq C_\mathrm{sym}\|P_{t}|x|\|_p \leq C_\mathrm{sym}^2(1 + t)^{-\frac{d}{2}(1 - \frac{1}{p})}\|x\|_1,
	\] 
	for some constant $C_\mathrm{sym} > 0$, where we have introduced the notation $|x| = \{|x_v|\}_{v \in V}$. This then gives that 
	\[
		Q_p(P_tx) \leq C_0C_\mathrm{sym}^2D^\frac{1}{p}M(1 + t)^{-\frac{d}{2}(1 - \frac{1}{p})-\frac{\beta}{2}}\|x\|_1,	
	\]
	which proves the cases $p \in [1,\infty)$ with $\eta = \frac{\beta}{2}$. The case $p = \infty$ follows in a nearly identical fashion, and is therefore omitted. This completes the proof. 
\end{proof} 

\section{Ultracontractive Properties for Directed Graphs} \label{sec:Asymmetric} 

In this section we show that an understanding of the graph $G_\mathrm{sym}$ and its associated symmetric graph Laplacian $L_\mathrm{sym}$ can be used to understand the decay of solutions to (\ref{graphODE}). The following theorem is our main result on the decay of solutions to (\ref{graphODE}), and its proof will be broken up into a series of lemmas throughout this section.

\begin{thm} \label{thm:AsymDecay} 
	Consider the linear ordinary differential equation (\ref{graphODE}), and construct $G_\mathrm{sym}$ and $L_\mathrm{sym}$ as defined in Definition~\ref{def:Laplacians} and \ref{def:SymmetricGraph}, respectively. Assume that $G_\mathrm{sym}$ satisfies Hypothesis~\ref{hyp:SymmetricWeights}, $\Delta$, PI, and VG(d) for some $d \geq 2$ and that Hypothesis~\ref{hyp:AsymWeights} is true. Then, there exists a continuous, positive, strictly increasing function $f:[0,\infty) \to (0,\infty)$ and a constant $\eta > 0$ such that for all $x_0 \in \ell^1$ we have that the solution $x(t) = e^{Lt}x_0$ to (\ref{graphODE}) satisfies the following decay estimates:
	\begin{equation}
		\begin{split}
			\|x(t)\|_p &\leq f(W)(1+t)^{-\frac{d}{2}(1 - \frac{1}{p})}\|x_0\|_1, \\
			Q_p(x(t)) &\leq f(W)(1+t)^{-\frac{d}{2}(1 - \frac{1}{p}) - \eta}\|x_0\|_1,
		\end{split}
	\end{equation}    
	for all $t \geq 0$ and $p \in [1,\infty]$.
\end{thm}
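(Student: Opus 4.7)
The strategy is to treat $L_{\mathrm{skew}}$ as a perturbation of $L_{\mathrm{sym}}$ and transfer the ultracontractive estimates for $P_t := e^{L_{\mathrm{sym}}t}$ (Proposition~\ref{prop:SymDecay} and Lemma~\ref{lem:GradDecay}) to $e^{Lt}$ via Duhamel's formula
\[
x(t) = P_t x_0 + \int_0^t P_{t-s}\,L_{\mathrm{skew}}\,x(s)\,ds.
\]
The crucial structural observation is that $L_{\mathrm{skew}}$ is controlled by the discrete gradient, with a constant that is precisely the quantity $W$ of Hypothesis~\ref{hyp:AsymWeights}. Indeed, item~1 of Hypothesis~\ref{hyp:SymmetricWeights} forces $v'\in N(v)$ whenever $w_{\mathrm{skew}}(v,v')\neq 0$, so
\[
\|L_{\mathrm{skew}}y\|_1 \;\leq\; \sum_v\sum_{v'\in N(v)}|w_{\mathrm{skew}}(v,v')|\,|y_{v'}-y_v| \;\leq\; W\,Q_\infty(y).
\]

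Combining this gradient bound on $L_{\mathrm{skew}}$ with Lemma~\ref{lem:GradDecay} applied to $P_{t-s}$ yields a self-referential Volterra inequality for the gradient seminorm of the solution,
\[
Q_\infty(x(t)) \;\leq\; C_Q(1+t)^{-d/2-\eta}\|x_0\|_1 + C_Q W \int_0^t (1+t-s)^{-d/2-\eta}\,Q_\infty(x(s))\,ds,
\]
in which the hypothesis $d\geq 2$ plays a critical role: it guarantees $d/2+\eta>1$, so the kernel $(1+u)^{-d/2-\eta}$ is integrable on $[0,\infty)$. Setting $h(t):=(1+t)^{d/2+\eta}Q_\infty(x(t))/\|x_0\|_1$, the inequality rescales to one whose integral kernel $(1+t)^{d/2+\eta}(1+t-s)^{-d/2-\eta}(1+s)^{-d/2-\eta}$ has finite $L^1$-norm in $s$, uniformly in $t$ (by splitting the integral at $s=t/2$). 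The goal is then to extract from this a bound of the form $Q_\infty(x(t))\leq f(W)(1+t)^{-d/2-\eta}\|x_0\|_1$ with $f$ continuous, positive, and strictly increasing in $W$. I expect this closure of the Volterra inequality, in a form that remains valid for every finite $W$ rather than only in a small-perturbation regime where the Neumann series is trivially contractive, to be the principal technical obstacle, and to require a resolvent-type or bootstrap argument tailored to the polynomial nature of the kernel.

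Once the $Q_\infty$-estimate is in hand, the remaining bounds follow by reinserting it into Duhamel. Proposition~\ref{prop:SymDecay} bounds $P_{t-s}$ on $\ell^p$ and gives
\[
\|x(t)\|_p \leq C_{\mathrm{sym}}(1+t)^{-\frac{d}{2}\left(1-\frac{1}{p}\right)}\|x_0\|_1 + C_{\mathrm{sym}}\,W\,f(W)\,\|x_0\|_1\int_0^t (1+t-s)^{-\frac{d}{2}\left(1-\frac{1}{p}\right)}(1+s)^{-\frac{d}{2}-\eta}\,ds,
\]
and splitting the convolution integral at $s=t/2$ produces the desired decay rate $(1+t)^{-\frac{d}{2}(1-1/p)}$, once more using $d/2+\eta>1$. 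The sharper gradient estimate on $Q_p(x(t))$ is obtained identically, replacing the $\ell^p$-action of $P_{t-s}$ with its $Q_p$-action from Lemma~\ref{lem:GradDecay}, which supplies the extra factor of $(1+t)^{-\eta}$ in the final bound.
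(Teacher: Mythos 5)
Your outline follows the paper's proof almost step for step: the same Duhamel formula, the same key bound $\|L_\mathrm{skew}y\|_1\le W\,Q_\infty(y)$ (your observation that Hypothesis~\ref{hyp:SymmetricWeights} forces $v'\in N(v)$ whenever $w_\mathrm{skew}(v,v')\neq 0$ is correct, and indeed needed for $Q_\infty$ to control the differences), the same Volterra inequality for $Q_\infty(x(t))$, and the same re-insertion into Duhamel for the norm bounds. The only genuine gap is precisely the step you yourself flag: you never close the Volterra inequality, and you leave open whether this can be done for arbitrary finite $W$ rather than in a small-$W$ contraction regime. Since every subsequent estimate hinges on the resulting $Q_\infty$ decay, the argument as written is incomplete at its central point.

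The paper closes it with a generalized Gronwall inequality, and the mechanism is exactly the integrability you already isolated: from $u(t)\le a(t)+C_QW\int_0^t(1+t-s)^{-\frac d2-\eta}u(s)\,ds$ with $a(t)=C_Q(1+t)^{-\frac d2-\eta}\|x_0\|_1$, Gronwall gives $u(t)\le a(t)+C_QW\int_0^t a(s)(1+t-s)^{-\frac d2-\eta}\exp\bigl(C_QW\int_s^t(1+t-r)^{-\frac d2-\eta}dr\bigr)ds$. Because $d\ge 2$ forces $\frac d2+\eta>1$, the inner integral is bounded by $\frac{2}{d+2\eta-2}$ uniformly in $s,t$, so the exponential is a fixed constant $e^{2C_QW/(d+2\eta-2)}$ depending only on $W$; the remaining convolution $\int_0^t(1+t-s)^{-\frac d2-\eta}(1+s)^{-\frac d2-\eta}ds$ is handled by the Chern--Liu lemma (Lemma~\ref{lem:IntegralLemma}) and decays like $(1+t)^{-\frac d2-\eta}$. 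This yields $Q_\infty(x(t))\le f_1(W)(1+t)^{-\frac d2-\eta}\|x_0\|_1$ with $f_1(W)=C_Q\bigl(1+C_QW\,C_{\frac d2+\eta,\frac d2+\eta}\,e^{2C_QW/(d+2\eta-2)}\bigr)$, which is continuous and strictly increasing in $W$ and requires no smallness of $W$ --- the price of large $W$ is only exponential growth of the constant, which the theorem tolerates. So the ``resolvent-type'' argument you anticipated is just this Gronwall bound; supplying it would complete your proof. The only other (harmless) deviation is cosmetic: the paper establishes the cases $p=1$ and $p=\infty$ and interpolates via log-convexity of the $\ell^p$ norms, whereas you propose applying the convolution estimate separately for each $p$; both work, yours just invokes Lemma~\ref{lem:IntegralLemma} once per exponent instead of once at each endpoint.
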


\begin{rmk}
We note that our results only pertain to those graphs $G_\mathrm{sym}$ which satisfy VG(d) with $d \geq 2$, i.e. at least two-dimensional graphs. Of course this is a minor shortcoming of Theorem~\ref{thm:AsymDecay}, but we will see in the following proofs that the case $d < 2$ (particularly $d = 1$ for many applications) presents a major technical hurdle which cannot be overcome with the methods put forth in this manuscript. This same technical hurdle was encountered in the previous results of \cite{Bramburger}, and therefore it would be interesting if alternative methods were proposed which overcome the restriction to $d \geq 2$. 
\end{rmk}

\begin{rmk}
	All of our analysis in this manuscript relies heavily on the definitions of $w_\mathrm{sym}$ and $w_\mathrm{skew}$. Therefore, it would be interesting in the future to explore different definitions for these weights to see if the results of Theorem~\ref{thm:AsymDecay} can be extended to an even wider range of directed graphs than those considered herein.
\end{rmk}

We now proceed with the proof of Theorem~\ref{thm:AsymDecay}, beginning with the following lemma.

\begin{lem} \label{lem:AsymBnd} 
	Assume Hypothesis~\ref{hyp:AsymWeights}. Then, for all $x\in\ell^\infty(V)$ we have
	\[
		\|L_\mathrm{skew}x\|_1 \leq WQ_\infty(x),
	\]
	where $W < \infty$ is the quantity defined in (\ref{WValue}).
\end{lem}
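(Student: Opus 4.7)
The plan is to obtain the bound by a straightforward triangle-inequality estimate on the defining sum for $L_\mathrm{skew}$, the key observation being that a nonzero skew weight forces the pair of vertices to be neighbors in $G_\mathrm{sym}$, so that each difference $|x_{v'} - x_v|$ appearing with a nonzero coefficient is controlled by $Q_\infty(x)$.

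Concretely, I would start from
\[
\|L_\mathrm{skew}x\|_1 = \sum_{v\in V}\Bigl|\sum_{v'\in V} w_\mathrm{skew}(v,v')(x_{v'}-x_v)\Bigr|,
\]
and apply the triangle inequality inside and outside to obtain the double sum
\[
\|L_\mathrm{skew}x\|_1 \leq \sum_{v\in V}\sum_{v'\in V} |w_\mathrm{skew}(v,v')|\,|x_{v'}-x_v|.
\]
Since $x \in \ell^\infty(V)$ and each $|N(v)|\leq D$, the inner sum over $v'$ has only finitely many terms, so everything is well-defined before any interchange is needed.

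The crucial auxiliary observation I would isolate as a short intermediate step is the implication
\[
w_\mathrm{skew}(v,v') \neq 0 \ \Longrightarrow\ v' \in N(v).
\]
This follows from Hypothesis~\ref{hyp:SymmetricWeights}(1): if $w_\mathrm{skew}(v,v')\neq 0$ then at least one of $w(v,v'), w(v',v)$ is nonzero; nonnegativity of $w_\mathrm{sym}$ forces the nonzero directed weights to be positive, so $w_\mathrm{sym}(v,v') > 0$, i.e.\ $v'\in N(v)$. With this in hand, in every nonzero term of the double sum one may invoke the definition of $Q_\infty$ to get $|x_{v'}-x_v|\leq Q_\infty(x)$.

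Factoring $Q_\infty(x)$ out of the sum and recognizing what remains as precisely the quantity $W$ from \eqref{WValue} gives
\[
\|L_\mathrm{skew}x\|_1 \leq Q_\infty(x)\sum_{v\in V}\sum_{v'\in V}|w_\mathrm{skew}(v,v')| = WQ_\infty(x),
\]
which is the claimed estimate. There is no real obstacle here: the argument is essentially a bookkeeping exercise, and the only place one must be careful is in justifying that nonzero contributions to the double sum come only from neighboring pairs, which is exactly where Hypothesis~\ref{hyp:SymmetricWeights}(1) enters, while finiteness of $W$ under Hypothesis~\ref{hyp:AsymWeights} ensures that the right-hand side is meaningful.
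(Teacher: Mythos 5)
Your proof is correct and follows essentially the same route as the paper's: a termwise triangle-inequality bound on $|[L_\mathrm{skew}x]_v|$, controlling each difference by $Q_\infty(x)$ and summing over $v$ to recognize the constant $W$. The only difference is that you explicitly verify, via Hypothesis~\ref{hyp:SymmetricWeights}(1), that nonzero skew weights occur only between neighbours in $G_\mathrm{sym}$ (so that $|x_{v'}-x_v|\leq Q_\infty(x)$ is legitimate), a point the paper's proof leaves implicit.
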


\begin{proof}
	We begin by remarking that the assumption $x \in \ell^\infty(V)$ is merely to guarantee that $Q_\infty(x)$ is finite and may be loosened under appropriate conditions. Then, using $L_\mathrm{skew}$ given in Definition~\ref{def:Laplacians} we have that 
	\[
		|[L_\mathrm{skew}x]_v| \leq \sum_{v'\in V} |w_\mathrm{skew}(v,v')||x_{v'} - x_v| \leq \bigg(\sum_{v'\in V} |w_\mathrm{skew}(v,v')|\bigg)Q_\infty(x),
	\]
	for every $v \in V$. Then, taking the sum over all $v \in V$ we arrive at
	\[
		\|L_\mathrm{skew}x\|_1 \leq \bigg(\sum_{v \in V}\sum_{v'\in V} |w_\mathrm{skew}(v,v')|\bigg)Q_\infty(x) = WQ_\infty(x),	
	\]
	which proves the lemma.
\end{proof} 

Now, if $x(t)$ is a solution to (\ref{graphODE}) with initial condition $x(0) = x_0 \in \ell^1$, we trivially have that
\[
	\dot{x}(t) = L_\mathrm{sym}x(t) + L_\mathrm{skew}x(t).
\]
Then, using the variation of constants formula we obtain the equivalent integral form of the ordinary differential equation (\ref{graphODE}), given as
\begin{equation}\label{IntForm}
	x(t) = P_tx_0 + \int_0^t P_{t-s}L_\mathrm{skew}x(s)ds,
\end{equation}
where $P_t = e^{L_\mathrm{sym}t}$ is the semigroup with infinitesimal generator $L_\mathrm{sym}$ described in the previous section. Moreover, since we have assumed that $G_\mathrm{sym}$ satisfies Hypothesis~\ref{hyp:SymmetricWeights}, $\Delta$, PI, and VG(d) for some $d \geq 2$, we obtain the decay properties of both Proposition~\ref{prop:SymDecay} and Lemma~\ref{lem:GradDecay}. We now use the integral form (\ref{IntForm}) to prove Theorem~\ref{thm:AsymDecay}, but first we provide a useful lemma from $\cite{IntegralLemma}$.

\begin{lem}[{\em \cite{IntegralLemma}, \S 3, Lemma 3.2}] \label{lem:IntegralLemma} 
	Let $\gamma_1, \gamma_2$ be positive real numbers. If $\gamma_1,\gamma_2 \neq 1$ or if $\gamma_1 = 1 < \gamma_2$ then there exists a $C_{\gamma_1,\gamma_2} > 0$ such that
	\begin{equation}
		\int_0^t (1 + t - s)^{- \gamma_1}(1 + s)^{-\gamma_2}ds \leq C_{\gamma_1,\gamma_2} (1 + t)^{-\min\{\gamma_1 + \gamma_2 - 1, \gamma_1, \gamma_2\}},
	\end{equation}  	 
	for all $t \geq 0$.
\end{lem}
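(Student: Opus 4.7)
The plan is to prove the estimate by the standard trick of splitting the integration domain at $s = t/2$. On $[0, t/2]$ one has $1 + t - s \geq (1+t)/2$, so the factor $(1+t-s)^{-\gamma_1}$ may be pulled outside the integral at the cost of a constant $2^{\gamma_1}$; on $[t/2, t]$ one similarly has $1 + s \geq (1+t)/2$, and $(1+s)^{-\gamma_2}$ pulls out with constant $2^{\gamma_2}$. After the change of variables $u = t - s$ in the second piece, the problem reduces to estimating two primitive integrals of the form $\int_0^{t/2}(1+u)^{-\gamma}\,du$ with $\gamma = \gamma_1$ or $\gamma = \gamma_2$.

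These primitives are elementary. For $\gamma > 1$ the integral is bounded uniformly in $t$; for $\gamma < 1$ it equals $((1+t/2)^{1-\gamma} - 1)/(1 - \gamma)$, bounded by a constant multiple of $(1+t)^{1-\gamma}$; and for $\gamma = 1$ it equals $\log(1+t/2)$. Combining the two halves, the left-hand side of the asserted inequality is dominated by a constant times a sum of terms of the form $(1+t)^{-\gamma_1}$, $(1+t)^{-\gamma_2}$, and $(1+t)^{-(\gamma_1+\gamma_2-1)}$, the precise combination depending on how each of $\gamma_1,\gamma_2$ compares to $1$. The slowest decay rate among these three exponents is precisely $\min\{\gamma_1,\gamma_2,\gamma_1+\gamma_2-1\}$, giving the claimed bound.

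The only delicate point is the role of the hypothesis. The $\gamma = 1$ cases of the primitive produce a $\log(1+t)$ factor, which can ruin a pure-power bound. Excluding $\gamma_2 = 1$ prevents a factor $(1+t)^{-\gamma_1}\log(1+t)$ arising from the $[0,t/2]$ piece, which could not be absorbed into $(1+t)^{-\gamma_1}$; excluding $\gamma_1 = 1$ prevents the analogous bad factor from the $[t/2,t]$ piece. The one permitted boundary subcase is $\gamma_1 = 1 < \gamma_2$: there the offending factor is $(1+t)^{-\gamma_2}\log(1+t)$, and because $\gamma_2 > 1$ this is indeed bounded by a constant times $(1+t)^{-1}$ for all $t \geq 0$, matching the target exponent $\min\{\gamma_2,1,\gamma_2\} = 1$.

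There is no real analytic obstacle; the argument is a bookkeeping exercise over the finitely many subcases $\gamma_1 \gtrless 1$ and $\gamma_2 \gtrless 1$. The step requiring the most attention is verifying, in each subcase, that the arithmetic minimum appearing in the conclusion really does coincide with the slowest of the three candidate exponents produced by the split, and confirming the absorption of the single admissible logarithmic factor in the subcase $\gamma_1 = 1 < \gamma_2$.
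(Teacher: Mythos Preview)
The paper does not supply a proof of this lemma; it is simply quoted from Chern--Liu \cite{IntegralLemma}. Your argument is the standard one for such convolution estimates and is correct: the split at $s=t/2$, the elementary primitives in the three regimes $\gamma\lessgtr 1$ and $\gamma=1$, and the case-by-case identification of the minimum exponent all go through as you describe, including the absorption of the logarithm in the admitted subcase $\gamma_1=1<\gamma_2$.

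One minor comment on your discussion of the hypothesis: your explanation of why $\gamma_2=1$ is excluded is slightly imprecise. The factor $(1+t)^{-\gamma_1}\log(1+t)$ from the $[0,t/2]$ piece is only genuinely problematic when the target exponent equals $\gamma_1$, i.e.\ when $\gamma_1\leq 1$; if $\gamma_1>1$ the target is $(1+t)^{-1}$ and the logarithm is absorbed just as in your handling of $\gamma_1=1<\gamma_2$. In other words, the symmetric case $\gamma_2=1<\gamma_1$ would also work by your method, so the lemma's hypothesis is sufficient but not sharp. This has no bearing on the correctness of your proof of the statement as written.
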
 

\begin{lem}\label{lem:GradDecay2} 
	Assume that $G_\mathrm{sym} = (V,E_\mathrm{sym},w_\mathrm{sym})$ satisfies Hypothesis~\ref{hyp:SymmetricWeights}, $\Delta$, PI, and VG(d) for some $d \geq 2$, and that Hypothesis~\ref{hyp:AsymWeights} is true. Then, there exists a continuous, positive, strictly increasing function $f_1:[0,\infty) \to (0,\infty)$ such that for all $x_0 \in \ell^1$, the solution $x(t)$ to (\ref{graphODE}) with $x(0) = x_0$ satisfies
	\[
		Q_\infty(x(t)) \leq f_1(W)(1 + t)^{-\frac{d}{2}-\eta}\|x_0\|_1,
	\]
	for all $t \geq 0$, where $\eta > 0$ is the constant guaranteed by Lemma~\ref{lem:GradDecay}.
\end{lem}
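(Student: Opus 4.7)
The plan is to derive a Volterra-type integral inequality for $Q_\infty(x(t))$ from the integral form (\ref{IntForm}) and close it using Lemma~\ref{lem:IntegralLemma} together with a Picard iteration. Since $Q_\infty$ is a seminorm on the relevant sequence space, the Minkowski inequality for integrals lets me pull it inside the Duhamel integral in (\ref{IntForm}) to obtain
\[
Q_\infty(x(t)) \leq Q_\infty(P_t x_0) + \int_0^t Q_\infty(P_{t-s}L_\mathrm{skew}x(s))\,ds.
\]
Lemma~\ref{lem:GradDecay} with $p=\infty$ bounds the first term by $C_Q(1+t)^{-d/2-\eta}\|x_0\|_1$. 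For the integrand, Lemma~\ref{lem:AsymBnd} provides $L_\mathrm{skew}x(s)\in\ell^1$ with $\|L_\mathrm{skew}x(s)\|_1\leq W Q_\infty(x(s))$, and then applying Lemma~\ref{lem:GradDecay} once more to $P_{t-s}$ acting on this $\ell^1$-input yields $Q_\infty(P_{t-s}L_\mathrm{skew}x(s))\leq C_QW(1+t-s)^{-d/2-\eta}Q_\infty(x(s))$. Assembling these bounds produces the self-referential Volterra inequality
\[
Q_\infty(x(t)) \leq C_Q(1+t)^{-d/2-\eta}\|x_0\|_1 + C_QW \int_0^t (1+t-s)^{-d/2-\eta} Q_\infty(x(s))\,ds.
\]

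To close this inequality, I set $\gamma := d/2+\eta$ and note that the hypothesis $d\geq 2$ is used precisely to guarantee $\gamma>1$: both exponents in Lemma~\ref{lem:IntegralLemma} then differ from $1$, and its conclusion reduces to $\int_0^t(1+t-s)^{-\gamma}(1+s)^{-\gamma}\,ds\leq C_{\gamma,\gamma}(1+t)^{-\gamma}$ because $\min\{2\gamma-1,\gamma,\gamma\}=\gamma$. Plugging the ansatz $Q_\infty(x(s))\leq A(1+s)^{-\gamma}$ back into the right-hand side of the Volterra inequality reproduces the same profile $(1+t)^{-\gamma}$ with a larger amplitude, so iterating it, or equivalently summing the Neumann series for the resolvent of the convolution operator $\varphi\mapsto C_QW\int_0^t(1+t-s)^{-\gamma}\varphi(s)\,ds$, collapses to the target bound $Q_\infty(x(t))\leq f_1(W)(1+t)^{-\gamma}\|x_0\|_1$ with $f_1(W)$ proportional to $(1-C_QWC_{\gamma,\gamma})^{-1}$, a function manifestly continuous and strictly increasing in $W$.

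The main obstacle is verifying that $f_1$ can be taken finite throughout $[0,\infty)$, since the geometric-series construction above converges cleanly only when $C_QWC_{\gamma,\gamma}<1$. For $W$ beyond this threshold I expect one must either replace the straight Neumann summation by a more refined resolvent analysis of the convolution operator, or else combine the Volterra iteration on the tail with the a priori exponential estimate $\|x(t)\|_1\leq e^{(2MD+2W)t}\|x_0\|_1$ (which follows from $\|L\|_{\ell^1\to\ell^1}\leq 2MD+2W$) on a carefully chosen initial time interval $[0,T(W)]$, and then restarting the variation-of-constants formula at $T(W)$. Either route requires tracking the constants so that the resulting $f_1$ respects the target algebraic decay $(1+t)^{-d/2-\eta}$ while remaining continuous and strictly increasing on all of $[0,\infty)$.
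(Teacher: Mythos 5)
Your derivation of the Volterra inequality is exactly the paper's: Duhamel via (\ref{IntForm}), Lemma~\ref{lem:GradDecay} for the free term and for $P_{t-s}$ acting on the $\ell^1$ input, and Lemma~\ref{lem:AsymBnd} to convert $\|L_\mathrm{skew}x(s)\|_1$ into $WQ_\infty(x(s))$, giving
\[
Q_\infty(x(t)) \leq C_Q(1+t)^{-\frac{d}{2}-\eta}\|x_0\|_1 + C_QW\int_0^t (1+t-s)^{-\frac{d}{2}-\eta}Q_\infty(x(s))\,ds .
\]
The gap is in how you close this inequality. Your Neumann-series/Picard summation has ratio $C_QWC_{\gamma,\gamma}$ with $\gamma = \frac{d}{2}+\eta$, so it only converges under the smallness condition $C_QWC_{\gamma,\gamma}<1$, and the resulting $f_1(W)\propto (1-C_QWC_{\gamma,\gamma})^{-1}$ blows up at a finite value of $W$. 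The lemma, however, must hold for every finite $W$: Hypothesis~\ref{hyp:AsymWeights} makes no smallness assumption (the paper explicitly remarks that only finiteness of $W$ is assumed), and $f_1$ is required to be positive, continuous, and strictly increasing on all of $[0,\infty)$. Your two suggested repairs for large $W$ (a finer resolvent analysis, or an exponential a priori bound on $[0,T(W)]$ followed by restarting the variation-of-constants formula) are only sketched, and the restart route in particular still requires nontrivial bookkeeping to recover the uniform algebraic rate $(1+t)^{-\frac{d}{2}-\eta}$ with a $W$-dependence of the stated form; as written, the proof is complete only in the small-$W$ regime.

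The paper closes the same inequality without any smallness restriction by applying Gronwall's inequality with the kernel $C_QW(1+t-s)^{-\frac{d}{2}-\eta}$. The decisive point, which your argument does not exploit, is that for $d\geq 2$ the kernel is integrable in time, so the Gronwall exponential factor is bounded uniformly in $s,t$:
\[
\exp\Big(C_QW\int_s^t(1+t-r)^{-\frac{d}{2}-\eta}dr\Big) \leq e^{\frac{2C_QW}{d+2\eta-2}} .
\]
After pulling this constant out, Lemma~\ref{lem:IntegralLemma} with $\gamma_1=\gamma_2=\frac{d}{2}+\eta>1$ reproduces the profile $(1+t)^{-\frac{d}{2}-\eta}$, and one may take $f_1(W) = C_Q\big(1 + C_QWC_{\frac{d}{2}+\eta,\frac{d}{2}+\eta}\,e^{\frac{2C_QW}{d+2\eta-2}}\big)$, which is finite, continuous, and strictly increasing on all of $[0,\infty)$ (growth like $We^{cW}$ rather than a pole). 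Replacing your geometric summation with this Gronwall step is the missing ingredient that upgrades your argument from small $W$ to arbitrary finite $W$.
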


\begin{proof}
	Through straightforward manipulations of the integral form (\ref{IntForm}) one obtains 
	\[
		Q_\infty(x(t)) \leq Q_\infty(P_tx_0) + \int_0^t Q_\infty(P_{t-s}L_\mathrm{skew}x(s))ds.
	\]
	Then, using Lemmas~\ref{lem:GradDecay} and \ref{lem:AsymBnd} we obtain
	\[
		\begin{split}
			Q_\infty(x(t)) &\leq C_Q(1 + t)^{-\frac{d}{2}-\eta}\|x_0\|_1 + C_Q\int_0^t (1 + t -s)^{-\frac{d}{2}-\eta}\|L_\mathrm{skew}x(s))\|_1ds \\
			&\leq C_Q(1 + t)^{-\frac{d}{2}-\eta}\|x_0\|_1 + C_QW\int_0^t (1 + t -s)^{-\frac{d}{2}-\eta}Q_\infty(x(s))ds,  
		\end{split}		
	\]
	where $C_Q > 0$ is the constant guaranteed by Lemma~\ref{lem:GradDecay}. We now apply Gronwall's Inequality to obtain
	\[
		\begin{split}
		Q_\infty(x(t)) \leq &C_Q(1 + t)^{-\frac{d}{2}-\eta}\|x_0\|_1 \\ 
		&+ C_Q^2W\|x_0\|_1\int_0^t (1 + t -s)^{-\frac{d}{2}-\eta}(1 + s)^{-\frac{d}{2}-\eta}e^{C_QW\int_s^t(1 + t -r)^{-\frac{d}{2}-\eta}dr}ds. 	
		\end{split}
	\]
	
	Now, since $d \geq 2$, we have that $\frac{d}{2} + \eta > 1$, and hence 
	\[
		e^{C_QW\int_s^t(1 + t -r)^{-\frac{d}{2}-\eta}dr} \leq e^\frac{2C_QW}{d+2\eta-2}. 
	\] 
	for all $s,t \geq 0$. Then, combining this bound with the result of Lemma~\ref{lem:IntegralLemma} we find that
	\[
		\int_0^t (1 + t -s)^{-\frac{d}{2}-\eta}(1 + s)^{-\frac{d}{2}-\eta}e^{CW\int_s^t(1 + t -r)^{-\frac{d}{2}-\eta}dr}ds \leq C_{\frac{d}{2}+\eta,\frac{d}{2}+\eta}e^\frac{2CW}{d+2\eta-2}(1 + t)^{-\frac{d}{2}-\eta}. 	
	\]
	Putting this all together therefore gives
	\[
		Q_\infty(x(t)) \leq C_Q(1 + C_QWC_{\frac{d}{2}+\eta,\frac{d}{2}+\eta}e^\frac{2CW}{d+2\eta-2})(1 + t)^{-\frac{d}{2}-\eta}\|x_0\|_1,	
	\]
	which allows one to define $f_1(W) = C_Q(1 + C_QWC_{\frac{d}{2}+\eta,\frac{d}{2}+\eta}e^\frac{2CW}{d+2\eta-2})$, thus completing the proof.
\end{proof} 

\begin{cor} \label{cor:1InfDecay} 
	Assume that $G_\mathrm{sym} = (V,E_\mathrm{sym},w_\mathrm{sym})$ satisfies Hypothesis~\ref{hyp:SymmetricWeights}, $\Delta$, PI, and VG(d) for some $d \geq 2$, and that Hypothesis~\ref{hyp:AsymWeights} is true. Then, there exists a continuous, positive, strictly increasing function $f_2:[0,\infty) \to (0,\infty)$ such that for all $x_0 \in \ell^1$, the solution $x(t)$ to (\ref{graphODE}) with $x(0) = x_0$ satisfies
	\[
		\begin{split}
			\|x(t)\|_1 &\leq f_2(W)\|x_0\|_1,\\
			\|x(t)\|_\infty &\leq f_2(W)(1 + t)^{-\frac{d}{2}}\|x_0\|_1,
		\end{split}
	\]
	for all $t \geq 0$.	
\end{cor}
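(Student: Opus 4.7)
The plan is to bootstrap from the gradient decay established in Lemma~\ref{lem:GradDecay2} to control $\|x(t)\|_1$ and $\|x(t)\|_\infty$ by feeding the integral form (\ref{IntForm}) of the solution into the smoothing estimates from Proposition~\ref{prop:SymDecay} together with the bound $\|L_\mathrm{skew}y\|_1 \leq WQ_\infty(y)$ from Lemma~\ref{lem:AsymBnd}. Both estimates in the corollary will be read off directly from (\ref{IntForm}), with no Gronwall argument needed at this stage since the inhomogeneity $L_\mathrm{skew}x(s)$ is already controlled in $\ell^1$ through Lemma~\ref{lem:GradDecay2}.

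For the $\ell^1$ bound, I would take the $\ell^1$ norm in (\ref{IntForm}) and use the $p=1$ case of Proposition~\ref{prop:SymDecay} (which gives $\|P_t y\|_1 \leq C_\mathrm{sym}\|y\|_1$, i.e.\ boundedness without decay) to obtain
\[
\|x(t)\|_1 \leq C_\mathrm{sym}\|x_0\|_1 + C_\mathrm{sym}W\int_0^t Q_\infty(x(s))\,ds.
\]
Substituting the bound of Lemma~\ref{lem:GradDecay2} then gives an integrand of order $(1+s)^{-d/2-\eta}$. Since $d \geq 2$ forces $d/2+\eta > 1$, this integral is uniformly bounded in $t$, producing a time-independent constant multiplied by $\|x_0\|_1$. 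This constant depends continuously and monotonically on $W$ through $f_1(W)$, which supplies the first piece of $f_2$.

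For the $\ell^\infty$ bound, I would take the $\ell^\infty$ norm in (\ref{IntForm}) and use the $p=\infty$ case of Proposition~\ref{prop:SymDecay}, namely $\|P_t y\|_\infty \leq C_\mathrm{sym}(1+t)^{-d/2}\|y\|_1$, which yields
\[
\|x(t)\|_\infty \leq C_\mathrm{sym}(1+t)^{-d/2}\|x_0\|_1 + C_\mathrm{sym}W\int_0^t (1+t-s)^{-d/2} Q_\infty(x(s))\,ds,
\]
and again insert Lemma~\ref{lem:GradDecay2}. The integral now has the convolution shape
\[
\int_0^t (1+t-s)^{-d/2}(1+s)^{-d/2-\eta}\,ds,
\]
which is precisely what Lemma~\ref{lem:IntegralLemma} is designed to handle with $\gamma_1 = d/2$ and $\gamma_2 = d/2+\eta$. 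The main point of care is that when $d=2$ we have $\gamma_1 = 1$, but since $\gamma_2 = 1+\eta > 1$ the borderline case $\gamma_1 = 1 < \gamma_2$ of Lemma~\ref{lem:IntegralLemma} still applies. The exponent delivered by Lemma~\ref{lem:IntegralLemma} is $\min\{d-1+\eta,\,d/2,\,d/2+\eta\}$, and for every $d \geq 2$ this minimum equals $d/2$, yielding the desired decay rate exactly.

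The only real obstacle is the bookkeeping to ensure that the $W$-dependence remains continuous, positive, and strictly increasing: the constants $C_\mathrm{sym}$, $C_Q$, and $C_{d/2,d/2+\eta}$ are independent of $W$, while the factor $Wf_1(W)$ that appears in both estimates is continuous and monotone in $W$ by construction of $f_1$ in Lemma~\ref{lem:GradDecay2}. Defining
\[
f_2(W) := C_\mathrm{sym} + C_\mathrm{sym}Wf_1(W)\max\{C_{d/2,d/2+\eta},\, 1/(d/2+\eta-1)\}
\]
(or any equivalent monotone majorant of the two prefactors obtained above) then packages both inequalities into the single statement of the corollary. No new estimates are needed beyond those already assembled in Proposition~\ref{prop:SymDecay}, Lemma~\ref{lem:AsymBnd}, Lemma~\ref{lem:GradDecay2}, and Lemma~\ref{lem:IntegralLemma}; the proof is essentially a two-line application of each.
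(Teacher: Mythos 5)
Your argument is correct and follows essentially the same route as the paper: take norms in the integral form (\ref{IntForm}), apply Proposition~\ref{prop:SymDecay} to the semigroup factors, control the inhomogeneity via Lemma~\ref{lem:AsymBnd} and the already-established decay of $Q_\infty(x(s))$ from Lemma~\ref{lem:GradDecay2}, and close the $\ell^\infty$ estimate with Lemma~\ref{lem:IntegralLemma} (your check of the borderline case $\gamma_1=1<\gamma_2$ when $d=2$ and of the resulting exponent $\tfrac{d}{2}$ is accurate). The paper's proof differs only in bookkeeping for the explicit form of $f_2(W)$, so no further changes are needed.
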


\begin{proof}
	This proof follows in a similar way to that of Lemma~\ref{lem:GradDecay2}. Beginning with the $\ell^1(V)$ bound, we use (\ref{IntForm}) and the bounds from Proposition~\ref{prop:SymDecay} to see that 
	\[
		\begin{split}
			\|x(t)\|_1 &\leq \|P_tx_0\|_1 + \int_0^t \|P_{t-s}L_\mathrm{skew}x(s)\|_1ds \\
			&\leq C_\mathrm{sym}\|x_0\|_1 + C_\mathrm{sym}\int_0^t \|L_\mathrm{skew}x(s)\|_1ds \\
			&\leq C_\mathrm{sym}\|x_0\|_1 + C_\mathrm{sym}W\int_0^t Q_\infty(x(s))ds.
		\end{split}
	\] 
	Then, using Lemma~\ref{lem:GradDecay2} we obtain
	\[
		\begin{split}
			\|x(t)\|_1 &\leq C_\mathrm{sym}\|x_0\|_1 + C_\mathrm{sym}f_1(W)W\|x_0\|_1\int_0^t (1 + s)^{-\frac{d}{2}-\eta}ds \\
			&\leq C_\mathrm{sym}\bigg(1 + \frac{2f_1(W)W}{d + 2\eta-2}\bigg)\|x_0\|_1,
		\end{split}
	\] 
	for all $t\geq 0$ since $\frac{d}{2}+\eta > 1$, which proves the first bound.
	
	Through a nearly identical manipulation to that of Lemma~\ref{lem:GradDecay2} we arrive at
	\[
		\begin{split}
		\|x(t)\|_\infty &\leq C_\mathrm{sym}(1 + t)^{-\frac{d}{2}}\|x_0\|_1 + C_\mathrm{sym}W\int_0^t (1 + t-s)^{-\frac{d}{2}}Q_\infty(x(s))ds \\
			&\leq C_\mathrm{sym}(1 + t)^{-\frac{d}{2}}\|x_0\|_1 + C_\mathrm{sym}f_1(W)W\|x_0\|_1\int_0^t(1 + t-s)^{-\frac{d}{2}}(1 + s)^{-\frac{d}{2}-\eta}ds \\
			&\leq C_\mathrm{sym}\bigg(1 + C_{\frac{d}{2},\frac{d}{2}+\eta}f_1(W)W\bigg)(1 + t)^{-\frac{d}{2}}\|x_0\|_1,   
		\end{split}
	\]
	by Lemma~\ref{lem:IntegralLemma}. Hence, we may define $f_2:[0,\infty) \to (0,\infty)$ by
	\[
		f_2(W) := C_\mathrm{sym}\max\bigg\{1 + \frac{2f_1(W)W}{d + \eta},1 + C_{\frac{d}{2},\frac{d}{2}+\eta}f_1(W)W\bigg\},
	\]
	which proves the lemma.
\end{proof} 

\begin{cor}\label{cor:pDecay} 
	Assume that $G_\mathrm{sym} = (V,E_\mathrm{sym},w_\mathrm{sym})$ satisfies Hypothesis~\ref{hyp:SymmetricWeights}, $\Delta$, PI, and VG(d) for some $d \geq 2$, and that Hypothesis~\ref{hyp:AsymWeights} is true. Then, for all $x_0 \in \ell^1$, the solution $x(t)$ to (\ref{graphODE}) with $x(0) = x_0$ satisfies
	\[
		\|x(t)\|_p  \leq f_2(W)(1 + t)^{-\frac{d}{2}(1-\frac{1}{p})}\|x_0\|_1,
	\]
	for all $t \geq 0$ and $p \in [1,\infty]$, where $f_2:[0,\infty) \to (0,\infty)$ is the function from Corollary~\ref{cor:1InfDecay}.	
\end{cor}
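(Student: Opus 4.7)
The endpoint cases $p=1$ and $p=\infty$ are already handled by Corollary~\ref{cor:1InfDecay}, so the work to do is purely to interpolate between them and check that the same function $f_2$ goes through. My plan is to use log-convexity of the $\ell^p$ norms (Lyapunov's inequality): for any real sequence $y=\{y_v\}_{v\in V}$ and any $p\in(1,\infty)$,
\[
\|y\|_p^p \;=\; \sum_{v\in V} |y_v|^{p-1}\,|y_v| \;\leq\; \|y\|_\infty^{p-1}\,\|y\|_1,
\]
which, after raising to the power $1/p$, gives the elementary interpolation bound
\[
\|y\|_p \;\leq\; \|y\|_1^{\frac{1}{p}}\,\|y\|_\infty^{1-\frac{1}{p}}.
\]

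I would then apply this to $y=x(t)$ and substitute directly the two estimates supplied by Corollary~\ref{cor:1InfDecay}, namely $\|x(t)\|_1\leq f_2(W)\|x_0\|_1$ and $\|x(t)\|_\infty \leq f_2(W)(1+t)^{-d/2}\|x_0\|_1$. The exponents combine cleanly: the $\|x_0\|_1$ factor reassembles to the first power, the two copies of $f_2(W)$ combine to a single $f_2(W)$, and the time weight becomes $(1+t)^{-\frac{d}{2}(1-\frac{1}{p})}$. This yields exactly the claimed estimate for every $p\in(1,\infty)$, while the endpoint cases recover verbatim the two bounds of Corollary~\ref{cor:1InfDecay}. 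Consequently the same function $f_2$ from that corollary suffices for all $p\in[1,\infty]$.

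There is no real obstacle in this argument; the corollary is a purely algebraic consequence of the two endpoint bounds and the standard log-convexity of $\ell^p$ norms. The only point that warrants a line of verification is that the constant does not deteriorate with $p$, which is transparent from the computation above since $f_2(W)^{1/p}\cdot f_2(W)^{1-1/p}=f_2(W)$.
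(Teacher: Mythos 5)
Your argument is correct and follows the same route as the paper: the paper also proves this by log-convexity (Lyapunov interpolation) of the $\ell^p$ norms with endpoints $p_0=1$, $p_1=\infty$, plugging in the two bounds of Corollary~\ref{cor:1InfDecay}. Your explicit check that $f_2(W)^{1/p}f_2(W)^{1-1/p}=f_2(W)$ is the only detail the paper leaves implicit.
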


\begin{proof}
	This proof is a straightforward application of the log-convexity property of the $\ell^p$ norms, which dictates that for any $1 \leq p_0 \leq p_1 \leq \infty$ for all $x \in \ell^{p_0}(V)$ we have
	\begin{equation}
		\|x\|_{q} \leq \|x\|_{p_0}^{1 - \gamma}\|x\|_{p_1}^\gamma,
	\end{equation} 	
	where $q$ is defined by
	\[
		\frac{1}{q} = \frac{1 - \gamma}{p_0} + \frac{\gamma}{p_1}
	\] 
	for every $0 < \gamma < 1$. The proof is obtained by taking $p_0 = 1$ and $p_1 = \infty$ and applying the bounds from Corollary~\ref{cor:1InfDecay}.
\end{proof} 

Corollaries~\ref{cor:1InfDecay} and \ref{cor:pDecay} therefore give the proof of the $p$-norm bounds on the solution stated in Theorem~\ref{thm:AsymDecay}. The remaining $Q_p$ follow in exactly the same way to those of the $p$-norms, and are therefore omitted. This completes the proof of Theorem~\ref{thm:AsymDecay}.

\section{Importance of Hypothesis~\ref{hyp:AsymWeights}} \label{sec:Advection} 

We now detail a situation in which Hypothesis~\ref{hyp:AsymWeights} fails, and show that in this scenario we cannot obtain the decay rates of Theorem~\ref{thm:AsymDecay}. Since our results only apply to graphs of dimension two or higher, we will work with a two dimensional graph, although we note simpler examples can be created for one-dimensional graphs.

Let us consider the vertex set $V = \mathbb{Z}^2$ along with the linear ordinary differential equation
\begin{equation} \label{PureAdvection}
	\dot{x}_{i,j} = \begin{cases} 
      (x_{i-1,j} - x_{i,j}) + (x_{i,j-1} - x_{i,j}) & j\geq 1 \\
      (x_{i-1,j} - x_{i,j}) & j = 0 \\
      (x_{i-1,j} - x_{i,j}) + (x_{i,j+1} - x_{i,j}) & j \leq -1 
   \end{cases} 
\end{equation}
In the context of our present work, we find that the associated graph is composed of directed edges connecting $(i,j)$ to $(i-1,j)$, along with directed edges connecting $(i,j)$ to $(i,j-1)$ when $j \geq 1$ and $(i,j)$ to $(i,j+1)$ when $j \leq -1$, all with identical weights of $1$. Furthermore, the associated symmetric graph then has an edge set for which every $(i,j) \in \mathbb{Z}^2$ is connected to $(i\pm1,j)$ and $(i,j\pm1)$, with identical weights of $\frac{1}{2}$. Figure~\ref{fig:DirectedGraph} provides a visualization of this directed graph. Importantly, one may follow the methods of \cite[Section~6]{Bramburger} to see that this associated symmetric graph indeed satisfies $\Delta$, PI, and VG(2). Then, the associated symmetric graph leads to expected decay bounds of the order $(1 + t)^{-1}$ for all $t \geq 0$. But we note that the only requirement that fails to apply Theorem~\ref{thm:AsymDecay} is the condition that $W < \infty$, required by Hypothesis~\ref{hyp:AsymWeights}. 

\begin{figure} 
	\centering
		\includegraphics[width = 0.45\textwidth]{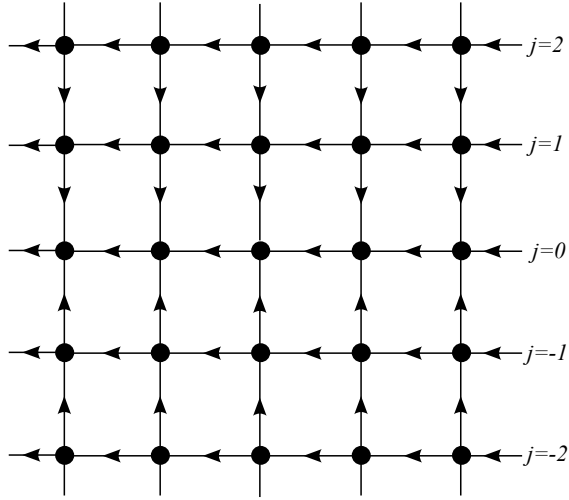}
		\caption{The directed graph associated to the linear differential system (\ref{PureAdvection}). Here the vertices lie in one-to-one correspondence with the elements of $(i,j) \in\mathbb{Z}^2$ and the direction of the edges is given by the arrow. All edges have weight exactly $1$. The resulting symmetric graph looks nearly identical, but with the arrows removed from the edges. In the case of the associated symmetric graph all edges have weight $\frac{1}{2}$.}
		\label{fig:DirectedGraph}	
\end{figure} 

Now, let us take the initial condition $x_0 = \{x_{i,j}^0\}_{(i,j)\in\mathbb{Z}^2}$ given by 
\[
	x_{i,j}^0 = \left\{
     		\begin{array}{cl} 1 & (i,j) = (0,0) \\ \\
		0 & (i,j) \neq (0,0).
		\end{array}
   	\right.
\] 
It is a straightforward argument to find that with this initial condition we have $x_{i,0}(t) = 0$ for all $i < 0$ and $t \geq 0$. The reason for this is that each element with index $j = 0$ depends only on those elements to the left of them, and since only the site $i = j = 0$ is activated with this initial condition, it can only influence those elements with $j = 0$ to the right of it.

We begin by observing that at index $(i,j) = (0,0)$ we have
\[
	\dot{x}_{0,0} = -x_{0,0} \implies x_{0,0}(t) = \mathrm{e}^{-t},
\] 
since $x_{-1,0}(t) = 0$ for all $t \geq 0$ and $x_{i,j}(0) = 1$. Then, moving to index $(i,j) = (1,0)$ we can substitute the solution for $x_{0,0}(t)$ to obtain
\[
	\dot{x}_{1,0} = e^{-t} -x_{1,0} \implies x_{1,0}(t) = t\mathrm{e}^{-t},
\] 
since $x_{1,0}(0) = 0$. Continuing in this way, an inductive argument shows that 
\begin{equation}\label{LDSSoln}
	x_{i,0}(t) = \frac{t^i}{i!}\mathrm{e}^{-t},
\end{equation}
for all $t \geq 0$. We plot the first few of these functions of visual reference in Figure~\ref{fig:LDSSoln}.

\begin{figure} 
	\centering
		\includegraphics[width = 0.8\textwidth]{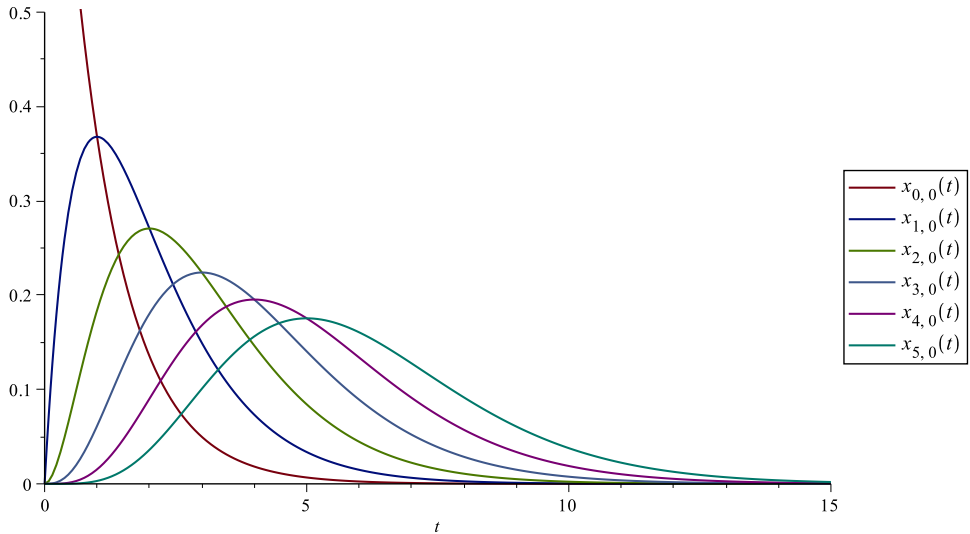}
		\caption{The functions (\ref{LDSSoln}) for $i = 0,1,2,3,4,5$. Note the unique global maximum of $x_{i,0}(t)$ at $t = i$.}
		\label{fig:LDSSoln}	
\end{figure} 

Then, for each $i \geq 1$, differentiating $x_{i,0}(t)$ with respect to $t$ gives
\[
	\frac{dx_{i,0}}{dt}(t) = \frac{t^{i-1}}{(i-1)!}\bigg(1-\frac{t}{i}\bigg)\mathrm{e}^{-t}. 
\] 
Hence, $x_{i,0}(t)$ attains its global maximum at $t = i$, and this maximum is given by
\[
	x_{i,0}(i) = \frac{i^i}{i!}\mathrm{e}^{-i}.
\]
Using Stirling's Approximation we find that $x_{i,0}(i) \geq \mathrm{e}^{-1}i^{-\frac{1}{2}}$. Hence, we see that the solution $x(t)$ to the differential equation (\ref{PureAdvection}) is such that
\[
	\|x(i)\|_\infty \geq \mathrm{e}^{-1}i^{-\frac{1}{2}}, 
\]
which shows that there cannot exists a constant $C > 0$ such that $\|x(t)\|_\infty \leq C(1 + t)^{-1}$ for all $t\geq 0$ since $i^{-\frac{1}{2}}$ cannot be bounded uniformly by a constant multiple of the function $(1 + i)^{-1}$ for all $i \geq 0$. Therefore, system (\ref{PureAdvection}) provides an example of a system for which the failure to have Hypothesis~\ref{hyp:AsymWeights}, but all other hypotheses of Theorem~\ref{thm:AsymDecay} hold, leads to solutions which can only decay at a rate of $(1 + t)^{-\frac{1}{2}}$, significantly slower than the $(1 + t)^{-1}$ that holds for the given associated symmetric graph. This presents a major problem in the analysis of the following section, since we require decays rates of at least $(1 + t)^{-1}$ to apply bootstrapping arguments to extend from linear ordinary differential equations to local asymptotical stability of nonlinear ordinary differential equation. Hence, in this case an understanding of the associated symmetric graph cannot inform our understanding of the directed graph and the decay of solutions to the differential equation (\ref{PureAdvection}).

\section{Application to Coupled Oscillators} \label{sec:CoupledOscillators} 

We now reserve this final section for an application of the results of Theorem~\ref{thm:AsymDecay}. To begin, it is well-known that systems of weakly coupled oscillators can be reduced through a process of averaging to a single phase variable under minor technical assumptions \cite{Corinto,ErmentroutAverage,WeaklyConnectedBook,Udeigwe}. In complete generality for a countable index set $V$ these systems take the form 
\begin{equation}\label{PhaseLattice}
	\dot{\theta}_v = \omega_v + \sum_{v' \in V\setminus\{v\}} H(\theta_{v'} - \theta_v,v,v'),
\end{equation}
where the function $H:\mathbb{R} \times V \times V \to \mathbb{R}$ is assumed to be smooth and $2\pi$-periodic in the first variable. The constants $\omega_v \in \mathbb{R}$ are taken to represent intrinsic differences in the oscillators and/or external inputs. In the work of \cite{Bramburger} it was assumed that the functions $H$ were independent of $(v,v')$, leading to the limited focus on identically coupled oscillators. With the results of the previous section, we are now able to expand to more general functions $H$, thus providing a more robust result to that of \cite{Bramburger}.

A solution to (\ref{PhaseLattice}) is called {\em phase-locked} (or synchronous) if it takes the form 
\begin{equation}\label{PhaseAnsatz}
	\theta_v(t) = \Omega t + \bar{\theta}_v,
\end{equation}
where $\bar{\theta} = \{\bar{\theta}_v\}_{v \in V}$ are time-independent phase-lags and the elements $\theta_v(t)$ are moving with identical velocity $\Omega\in\mathbb{R}$. Assuming the existence of a phase-locked solution to (\ref{PhaseLattice}) of the form (\ref{PhaseAnsatz}), the resulting linearization about this solutions leads to the linear operator, denoted $L_{\bar{\theta}}$, acting on the real sequences $x = \{x_v\}_{v\in V}$ by
\begin{equation}\label{PhaseLinear}
	[L_{\bar{\theta}}x]_v = \sum_{v'\in V\setminus\{v\}} H'(\bar{\theta}_{v'}-\bar{\theta}_v,v,v')(x_{v'} - x_v),
\end{equation}
for all $v \in V$, where the prime notation denotes differentiation with respect to the first component of $H$. The form of $L_{\bar{\theta}}$ given in (\ref{PhaseLinear}) should be immediately recognized as of the form of a graph Laplacian operator with 
\[
	w(v,v') = H'(\bar{\theta}_{v'}-\bar{\theta}_v,v,v'), 
\] 
for all $v,v' \in V$. This leads to the nontrivial extension of \cite[Theorem~4.5]{Bramburger}.

\begin{thm}\label{thm:PhaseStability} 
	Consider the system (\ref{PhaseLattice}) for a twice-differentiable function $H:\mathbb{R} \times V \times V \to \mathbb{R}$ such that the derivatives with respect to the first component are uniformly bounded in $\mathbb{R}\times V \times V$, and assume this system of equations possesses a phase-locked solution of the form (\ref{PhaseAnsatz}), denoted $\theta^{lock}(t)$. Then, if the resulting linear operator $L_{\bar{\theta}}$ defined in (\ref{PhaseLinear}) satisfies the hypotheses of Theorem~\ref{thm:AsymDecay}, we have the following: there exists an $\varepsilon > 0$ for which every $\theta_0 = \{\theta_{v,0}\}_{v \in V}$ with the property that
	\begin{equation}
		\|\theta_0 - \bar{\theta}\|_1 \leq \varepsilon,	
	\end{equation}   
	leads to a unique solution of (\ref{PhaseLattice}), $\theta(t)$ for all $t \geq 0$, satisfying the following properties:
	\begin{enumerate}
		\item $\theta(0) = \theta_0$.
		\item $\theta(t) - \theta^{lock}(t) \in \ell^p(V)$ for all $p \in [1,\infty]$.
		\item There exists a $C > 0$ such that 
			\begin{equation}
				\|\theta(t) - \theta_v^{lock}(t)\|_p \leq C (1 + t)^{-\frac{d}{2}(1 - \frac{1}{p})}\|\theta_0 - \bar{\theta}\|_1,
			\end{equation}
		for all $t \geq 0$ and $p \in [1,\infty]$.
	\end{enumerate}  
\end{thm}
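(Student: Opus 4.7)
The plan is a standard variation-of-constants / fixed-point argument in a weighted Banach space of decaying sequences, with the twist that the nonlinearity involves discrete gradient terms $y_{v'}-y_v$, so the seminorms $Q_p$ from Section~\ref{sec:Symmetric} appear alongside the $\ell^p$ norms throughout. Setting $y(t)=\theta(t)-\theta^{lock}(t)$, subtracting the phase-locking identity $\Omega=\omega_v+\sum_{v'}H(\bar{\theta}_{v'}-\bar{\theta}_v,v,v')$ from (\ref{PhaseLattice}), and Taylor expanding $H$ to second order in its first argument about $\bar{\theta}_{v'}-\bar{\theta}_v$ yields
\[
\dot{y}_v = [L_{\bar{\theta}}y]_v + N(y)_v, \qquad N(y)_v = \tfrac{1}{2}\sum_{v'\in V\setminus\{v\}} H''(\xi_{v,v'},v,v')(y_{v'}-y_v)^2.
\]
Since $|H''|$ is uniformly bounded by hypothesis and $H(\cdot,v,v')\equiv 0$ on non-edges (as is implicitly required for $L_{\bar{\theta}}$ to satisfy Hypothesis~\ref{hyp:SymmetricWeights}), the sum collapses to $v'\in N(v)$ and H\"older's inequality on each $v$-slice gives
\[
\|N(y)\|_p \leq C_p\, Q_\infty(y)\, Q_p(y) \qquad \text{for every } p\in[1,\infty],
\]
with $C_p$ depending only on $\|H''\|_\infty$ and the degree bound $D$.

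Next I would work with the mild formulation $y(t)=e^{L_{\bar{\theta}}t}y_0+\int_0^t e^{L_{\bar{\theta}}(t-s)}N(y(s))\,ds$ in the Banach space $\mathcal{X}$ of continuous curves $y:[0,\infty)\to\ell^1(V)$ equipped with
\[
\|y\|_{\mathcal{X}} := \sup_{t\geq 0,\, p\in[1,\infty]}\Bigl[(1+t)^{\frac{d}{2}(1-\frac{1}{p})}\|y(t)\|_p + (1+t)^{\frac{d}{2}(1-\frac{1}{p})+\eta}Q_p(y(t))\Bigr].
\]
By Theorem~\ref{thm:AsymDecay} applied to $L_{\bar{\theta}}$, the linear part satisfies $\|e^{L_{\bar{\theta}}\cdot}y_0\|_{\mathcal{X}}\leq f(W)\|y_0\|_1$. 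For the convolution term, applying Theorem~\ref{thm:AsymDecay} to the integrand with data $N(y(s))\in\ell^1$ and then substituting the nonlinear bound produces an integrand of order $(1+t-s)^{-\frac{d}{2}(1-\frac{1}{p})}(1+s)^{-\frac{d}{2}-2\eta}\|y\|_{\mathcal{X}}^2$ for the $\ell^p$ piece, and the analogous expression with an additional $(1+t-s)^{-\eta}$ factor for the $Q_p$ piece. Because $d\geq 2$ forces $\tfrac{d}{2}+2\eta>1$, Lemma~\ref{lem:IntegralLemma} collapses each of these to exactly the decay rates appearing in the definition of $\|\cdot\|_{\mathcal{X}}$.

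A standard contraction argument on the closed ball of radius $2f(W)\varepsilon$ in $\mathcal{X}$ then yields a unique fixed point $y$ for all sufficiently small $\varepsilon>0$; the difference estimate on $N(y)-N(\tilde{y})$ reduces to the same convolution bound after telescoping $(y_{v'}-y_v)^2-(\tilde{y}_{v'}-\tilde{y}_v)^2$ into two linear factors and repeating the above. Conclusions (1)--(3) of the theorem then follow directly from membership in $\mathcal{X}$. The main obstacle is precisely the integrability of the nonlinearity: the best available control is $\|N(y(s))\|_1 = O((1+s)^{-d/2-2\eta})$, which must close against the $(1+t-s)^{-\frac{d}{2}(1-\frac{1}{p})}$ semigroup kernel; this forces $d\geq 2$ exactly as in the remark following Theorem~\ref{thm:AsymDecay} and as illustrated by the counterexample of Section~\ref{sec:Advection}.
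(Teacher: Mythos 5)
Your proposal is correct and is essentially the argument the paper relies on: the paper omits the proof, stating it is identical to the bootstrapping proof of Theorem~4.5 of \cite{Bramburger}, which is precisely your Duhamel-plus-contraction scheme driven by the linear decay rates of Theorem~\ref{thm:AsymDecay} together with Lemma~\ref{lem:IntegralLemma}, with $d\geq 2$ ensuring the nonlinear term is time-integrable. The only caveats are routine: the implicit locality/summability assumption on $H$ (which you flag yourself) and the $p=1$ case, where $\gamma_1=0$ falls outside Lemma~\ref{lem:IntegralLemma} and one instead uses the direct integrability of $(1+s)^{-\frac{d}{2}-2\eta}$.
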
 

Due to the results of Theorem~\ref{thm:AsymDecay}, the proof of Theorem~\ref{thm:PhaseStability} is identical to the proof of \cite[Theorem~4.5]{Bramburger} and is therefore omitted. Prior to concluding this section, we comment on a simple application of Theorem~\ref{thm:PhaseStability} to optimally convey these results. Consider system (\ref{PhaseLattice}) with $\omega_v = \omega \in \mathbb{R}$ for all $v \in V$, and 
\[
	H(x,v,v') = k_{v,v'}\sin(x),
\]
where $K = [k_{v,v'}]_{v,v'\in V}$ is an infinite matrix of coupling coefficients. We note that no assumption on the signs of the $k_{v,v'}$ will be made. A trivial example of a phase-locked solution to such a system of coupled oscillators is obtained by taking $\Omega = \omega$ and $\bar{\theta}_v = 0$ for all $v \in V$. Hence, linearizing about this phase-locked solution results in a linear operator of the form on the right-hand side of (\ref{graphODE}) with 
\[
	w(v,v') = k_{v,v'},
\]  
for all $v,v' \in V$. Hence, using Theorem~\ref{thm:PhaseStability} we see that the stability of this trivial phase-locked solution can be determined by examining the directed graph induced by the coupling matrix $K$. Moreover, if $K$ can be shown to satisfy the graph-theoretic hypotheses of Theorem~\ref{thm:AsymDecay}, one may use Theorem~\ref{thm:PhaseStability} to infer local asymptotic stability of the trivial phase-locked solution with respect to perturbations in $\ell^1(V)$.

In particular, one can simply define the infinite matrices $K_\mathrm{sym}$ and $K_\mathrm{skew}$ by 
\[
	\begin{split}
		K_\mathrm{sym} &= \frac{1}{2}[K + K^T], \\
		K_\mathrm{skew} &= \frac{1}{2}[K - K^T],	
	\end{split}
\] 
where $K^T = [k_{v',v}]_{v,v'\in V}$ is the formal transpose of the infinite matrix $K$. The entries of $K_\mathrm{sym}$ are exactly the weights of the associated symmetric graph, and hence to satisfy Hypothesis~\ref{hyp:SymmetricWeights} one must first check that the elements of $K_\mathrm{sym}$ are both nonnegative and uniformly bounded above. Furthermore, it must be so that each row and column contains only finitely many nonzero entries, which this number of nonzero entries is uniformly bounded above over all rows and columns. Checking that the symmetric graph defined by $K_\mathrm{sym}$ satisfies $\Delta$, PI, and VG(d) for $d \geq 2$ can be followed as in \cite[Section~6]{Bramburger}. Finally, to satisfy Hypothesis~\ref{hyp:AsymWeights} we must have that the $\ell^1$ norm of the entries of $K_\mathrm{skew}$ is finite.

\section*{Acknowledgements} 

This work is supported by an NSERC PDF held at Brown University.

\end{document}